\def\sp{\succeq_{\rm SP}}
\newcommand{\C}{\mathcal{C}}
\newcommand{\fsd}{\succeq_{\rm SD}}
\newcommand{\pspace}{\mathcal{X}}
\newcommand{\apspace}{\mathcal{Y}}
\newcommand{\lpr}{\underline{P}}
\newcommand{\upr}{\overline{P}}
\newcommand{\pr}{P}
\newcommand{\ldf}{\underline{F}}
\newcommand{\udf}{\overline{F}}
\newcommand{\df}{F}
\newcommand{\dfs}{\mathcal{F}}
\newcommand{\solp}{\mathcal{M}}
\newcommand{\reals}{\mathbb{R}}
\newcommand{\realsexp}{\overline{\mathbb{R}}}
\newcommand{\partit}{\mathcal{B}}
\newcommand{\domain}{\mathcal K}
\newcommand{\lcop}{\underline{C}}
\newcommand{\ucop}{\overline{C}}
\newcommand{\gambles}{\mathcal{L}}
\newtheorem{theorem}{Theorem}
\newtheorem{lemma}{Lemma}
\newtheorem{proposition}{Proposition}
\newtheorem{corollary}{Corollary}
\theoremstyle{remark}
\newtheorem{remark}{Remark}
\newtheorem{example}{Example}
\newtheorem{definition}{Definition}
\begin{document}


\title{Sklar's theorem in an imprecise setting}

\author{IGNACIO MONTES}

\address{University of Oviedo (Spain)\\
Dept. of Statistics and O.R.\\
Oviedo\\
Spain}
\email{imontes@uniovi.es}

\author{ENRIQUE MIRANDA}

\address{University of Oviedo (Spain)\\
Dept. of Statistics and O.R.\\
Oviedo\\
Spain}
\email{mirandaenrique@uniovi.es}

\author{RENATO PELESSONI}

\address{DEAMS ``B. de Finetti''\\
University of Trieste\\
Piazzale Europa~1\\
I-34127 Trieste\\
Italy}
\email{renato.pelessoni@econ.units.it}

\author{PAOLO VICIG}

\address{DEAMS ``B. de Finetti''\\
University of Trieste\\
Piazzale Europa~1\\
I-34127 Trieste\\
Italy}
\email{paolo.vicig@econ.units.it}

\begin{abstract}
Sklar's theorem is an important tool that connects bidimensional
distribution functions with their marginals by means of a copula.
When there is imprecision about the marginals, we can model
the available information by means of p-boxes, that are pairs of
ordered distribution functions. Similarly, we can consider a set of
copulas instead of a single one. We study the extension of Sklar's
theorem under these conditions, and link the obtained results to
stochastic ordering with imprecision.

\smallskip
\noindent \textbf{Keywords.}
Sklar's theorem, copula, p-boxes, natural extension, independent
products, stochastic orders.
\end{abstract}

\maketitle


\section*{Acknowledgement}
*NOTICE: This is the authors' version of a work that was accepted for publication in Fuzzy Sets and Systems. Changes resulting from the publishing process, such as peer review, editing, corrections, structural formatting, and other quality control mechanisms may not be reflected in this document. Changes may have been made to this work since it was submitted for publication. A definitive version was subsequently published in Fuzzy Sets and Systems, vol. 278, 1~November~2015, pages 48-66, doi:10.1016/j.fss.2014.10.007 $\copyright$ Copyright Elsevier http://www.sciencedirect.com/science/article/pii/S0165011414004539

\vspace{0.3cm}
$\copyright$ 2015. This manuscript version is made available under the CC-BY-NC-ND 4.0 license http://creativecommons.org/licenses/by-nc-nd/4.0/

\begin{center}
 \includegraphics[width=2cm]{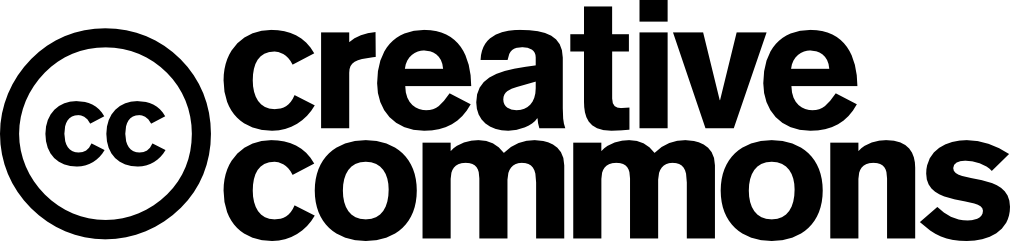}
 \includegraphics[width=2cm]{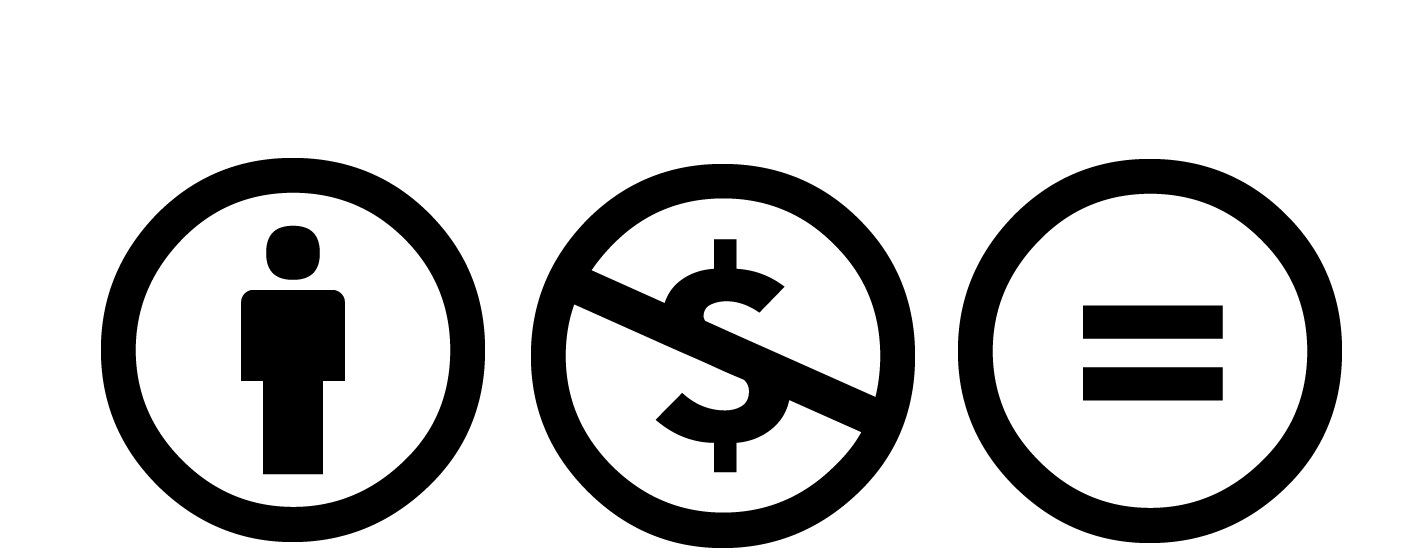}
\end{center}

\section{Introduction}
In this paper, we deal with the problem of combining two marginal
models representing the probabilistic information about two random
variables $X,Y$ into a bivariate model of the joint behaviour of
$(X,Y)$. In the classical case, this problem has a simple solution,
by means of {\em Sklar's} well-known {\em theorem} \cite{sklar1959},
that tells us that any bivariate distribution function can be
obtained as the combination of its marginals by means of a copula
\cite{nelsen1999}.

Here we investigate to what extent Sklar's theorem can be extended
in the context of imprecision, both in the marginal distribution
functions and in the copula that links them. The imprecision in
marginal distributions shall be modelled by a \emph{probability box}
\cite{ferson2003} (p-box, for short), that summarizes a set of
distribution functions by means of its lower and upper envelopes.
Regarding the imprecision about the copula, we shall also consider a
set of copulas. This set shall be represented by means of the newly
introduced notion of \emph{imprecise copula}, that we study in
Section~\ref{sec:bivariate-sklar}. This imprecision means that in
the bivariate case we end up with a set of bivariate distribution
functions, that we can summarize by means of a coherent bivariate
p-box, a notion recently studied in \cite{pelessoni2014}.

Interestingly, we shall show in Section~\ref{sec:bivariate-sklar}
that Sklar's theorem can be only partly extended to the imprecise
case; although the combination of two marginal p-boxes by means of a
set of copulas (or its associated imprecise copula) always produces
a coherent bivariate p-box, the most important aspect of the theorem
does not hold: not every coherent bivariate p-box can be obtained in
this manner. In Sections~\ref{sec:natex} and
\ref{sec:strong-product}, we consider two particular cases of
interest: that where we have no information about the copula that
links the two variables together, and that where we assume that the
two variables are independent. In those cases, we use Walley's
notions of natural extension \cite{walley1991} and (epistemic)
independent products \cite{cooman2011a,walley1991} to derive the
joint model.

In Section~\ref{sec:stochastic-orders}, we connect our results to
decision making by applying the notion of stochastic dominance
in this setting, and we establish a number of cases in which
the order existing on the marginals is preserved by their respective
joints. We conclude the paper with
some additional comments and remarks in Section~\ref{sec:conclusions}.

\section{Preliminary concepts}

\subsection{Coherent lower previsions}

Let us introduce the basic notions from the theory of coherent lower
previsions that we shall use later on in this paper. For a more
detailed exposition of the theory and for a behavioural
interpretation of the concepts below in terms of betting rates, we
refer to \cite{walley1991}.

Let $\Omega$ be a possibility space. A \emph{gamble} is a bounded
real-valued function $f:\Omega\rightarrow\mathbb{R}$. We shall
denote by $\gambles(\Omega)$ the set of all gambles on $\Omega$, and
by $\gambles^+(\Omega)$ the set of non-negative gambles. It includes
in particular the indicator functions of subsets $B$ of $\Omega$,
i.e., the gambles that take value $1$ on the elements of $B$ and $0$
elsewhere. In this paper, we shall use the same symbol for an event
$B$ and for its indicator function.

A \emph{lower prevision} is a functional
$\lpr:\domain\rightarrow\mathbb{R}$ defined on some set of gambles
$\domain\subseteq\gambles(\Omega)$. Here we are interested in lower
previsions satisfying the property of \emph{coherence}:

\begin{definition}[{\bf Coherent lower previsions}]
A lower prevision $\lpr:\gambles(\Omega)\rightarrow\mathbb{R}$ is
called \emph{coherent} when it satisfies the following conditions
for every pair of gambles $f,g\in\gambles(\Omega)$ and every
$\lambda>0$:
\begin{itemize}
 \item[(C1)] $\lpr(f)\geq\inf_{\omega\in \Omega} f(\omega)$.
 \item[(C2)] $\lpr(\lambda f)=\lambda \lpr(f)$.
 \item[(C3)] $\lpr(f+g)\geq\lpr(f)+\lpr(g)$.
\end{itemize}
\end{definition}

The restriction to events of a coherent lower prevision is called a
\emph{coherent lower probability}, and more generally a lower
prevision $\lpr$ on $\domain$ is said to be coherent whenever it can
be extended to a coherent lower prevision on $\gambles(\Omega)$. On
the other hand, if $\lpr$ is a coherent lower prevision on
$\gambles(\Omega)$ and it satisfies (C3) with equality for every $f$
and $g$ in $\gambles(\Omega)$, then it is called a \emph{linear}
prevision, and its restriction to events is a finitely additive
probability. In fact, coherent lower previsions can be given the
following sensitivity analysis interpretation: a lower prevision
$\lpr$ on $\domain$ is coherent if and only if it is the lower
envelope of its associated \emph{credal set},
\begin{equation}\label{eq:credal-set}
 \solp(\lpr):=\{\pr:\gambles(\Omega)\rightarrow\mathbb{R} \text{ linear prevision}: \pr(f)\geq\lpr(f) \ \forall
 f\in\domain\},
\end{equation}
and as a consequence the lower envelope of a set of linear
previsions is always a coherent lower prevision \cite[Section
3.3.3(b)]{walley1991}.

One particular instance of coherent lower probabilities are those
associated with $p$-boxes.

\begin{definition}\cite{ferson2003}\label{de:univ-pbox}
 A (univariate) \emph{$p$-box} is a pair $(\ldf,\udf)$ where
 $\ldf,\udf:\realsexp\rightarrow[0,1]$ are cumulative distribution
 functions (i.e., monotone and such that $\ldf(-\infty)=\udf(-\infty)=0,\ldf(+\infty)=\udf(+\infty)=1$) satisfying $\ldf(x)\leq\udf(x)$ for every
 $x\in\realsexp$.
\end{definition}

Define the set $A_x=[-\infty,x]$ for every $x\in\realsexp$, and let
\begin{equation*}
\mathcal{E}_{0}:=\{A_x: x\in\realsexp\}\cup \{A_x^c:
x\in\realsexp\}.
\end{equation*}
Then \cite{troffaes2011} a $p$-box $(\ldf,\udf)$ induces a coherent
lower probability $\lpr_{(\ldf,\udf)}: \mathcal{E}_{0}\rightarrow
[0,1]$ by
\begin{equation}\label{eq:lpr-from-pbox-univ}
 \lpr_{(\ldf,\udf)}(A_x)=\ldf(x) \text{ and }
 \lpr_{(\ldf,\udf)}(A_x^c)=1-\udf(x) \ \forall
 x\in\realsexp.
\end{equation}


\subsection{Bivariate $p$-boxes}

In \cite{pelessoni2014}, the notion of $p$-box from
Definition~\ref{de:univ-pbox} has been extended to the bivariate case,
to describe couples of random variables $(X,Y)$ in presence of imprecision.

\begin{definition}\cite{pelessoni2014}
A map $\df:\realsexp\times\realsexp\rightarrow [0,1]$ is called
\emph{standardized} when it is component-wise increasing, that is,
$\df(t_1,z)\leq \df(t_2,z)$ and $\df(z,t_1)\leq \df(z,t_2)$ for all
$t_1\leq t_2$ and $z$, and satisfies
\begin{equation*}
 \df(-\infty,y)=\df(x,-\infty)=0 \ \forall x,y\in\realsexp,\ \df(+\infty,+\infty)=1.
\end{equation*}
It is called a \emph{distribution function} for $(X,Y)$ when it is standardized
and satisfies
\begin{equation*}
 \df(x_2,y_2)+\df(x_1,y_1)-\df(x_1,y_2)-\df(x_2,y_1)\geq 0
\end{equation*}
for all $x_1, x_2, y_1, y_2\in\realsexp$ such that $x_1\leq x_2$, $y_1\leq y_2$
(with equality holding whenever $(x_1\leq X< x_2)\wedge (y_1\leq Y< y_2)$ is impossible).
Given two standardized functions
$\ldf,\udf:\realsexp\times\realsexp\rightarrow [0,1]$
satisfying $\ldf(x,y)\leq\udf(x,y)$ for every
$x,y\in\realsexp$, the pair $(\ldf,\udf)$ is called a {\em bivariate
$p$-box}.
\end{definition}

Bivariate $p$-boxes are introduced as a model for the imprecise
knowledge of a bivariate distribution function. The reason why the
lower and upper functions in a bivariate $p$-box are not required to
be distribution functions is that the lower and upper envelopes of a
set of bivariate distribution functions need not be distribution
functions themselves, as showed in \cite{pelessoni2014}.

Let $(\ldf,\udf)$ be a bivariate $p$-box on
$\realsexp\times\realsexp$. Define
$A_{(x,y)}=[-\infty,x]\times[-\infty,y]$ for every
$x,y\in\realsexp$, and consider the sets
\begin{equation*}
 {\mathcal D}:=\{A_{(x,y)}: x,y\in\realsexp\},
 {\mathcal D}_c:=\{A_{(x,y)}^c: x,y\in\realsexp\},
 \ \mathcal{E}:={\mathcal D}\cup{\mathcal D}_c.
\end{equation*}
Note that $A_{(+\infty,+\infty)}=\realsexp\times\realsexp$, whence
both $\realsexp\times\realsexp$ and $\emptyset$ belong to ${\mathcal
E}$. Similarly to Eq.~\eqref{eq:lpr-from-pbox-univ}, we can define
the \emph{lower probability induced by a bivariate $p$-box
$(\ldf,\udf)$} on $\realsexp\times\realsexp$ as the map
 $\lpr_{(\ldf,\udf)}:{\mathcal E}\rightarrow [0,1]$ given by:
\begin{equation}\label{eq:lpr-from-pbox}
 \lpr_{(\ldf,\udf)}(A_{(x,y)})=\ldf(x,y),\ \ \lpr_{(\ldf,\udf)}(A_{(x,y)}^c)=1-\udf(x,y)
\end{equation}
for every $x,y\in\realsexp$. Conversely, a lower probability
$\lpr:\mathcal{E}\rightarrow [0,1]$ determines a couple of functions
$\ldf_{\lpr},\udf_{\lpr}:\realsexp\times\realsexp\rightarrow[0,1]$
defined by
\begin{equation}\label{eq:pbox-from-lpr}
 \ldf_{\lpr}(x,y)=\lpr(A_{(x,y)}) \text{ and } \udf_{\lpr}(x,y)=1-\lpr(A_{(x,y)}^c) \ \forall x,y\in\realsexp.
\end{equation}
Then $(\ldf_{\lpr},\udf_{\lpr})$ is a bivariate $p$-box as soon as
the lower probability $\lpr$ is $2$-coherent \cite{pelessoni2014}.
$2$-coherence is a weak rationality condition implied by coherence
\cite[Appendix~B]{walley1991}, which in the context of this paper,
where the domain $\mathcal{E}$ is closed under complementation, is
equivalent \cite{pelessoni2014} to $\lpr_{(\ldf,\udf)}$ being
monotone, normalised, and such that
$\lpr_{(\ldf,\udf)}(E)+\lpr_{(\ldf,\udf)}(E^c)\leq 1$ for every
$E\in{\mathcal E}$.

The correspondence between bivariate $p$-boxes and lower
probabilities in terms of precise models is given by the following
lemma:\footnote{We give a brief sketch of the proof: it suffices to
establish the equivalences $\pr(A_{(x,y)}) \geq
\lpr_{(\ldf,\udf)}(A_{(x,y)}) \iff \df_{\pr}(x,y) \geq \ldf(x,y)$
and $\pr(A_{(x,y)}^c) \geq \lpr_{(\ldf,\udf)}(A_{(x,y)}^c) \iff
\df_{\pr}(x,y) \leq \udf(x,y)$
 for every $x,y\in\realsexp$. These follow easily from Eqs.~\eqref{eq:lpr-from-pbox} and~\eqref{eq:pbox-from-lpr}.}

\begin{lemma}\cite{pelessoni2014}
\label{le:dominating sets-correspondence} Let $(\ldf,\udf)$ be a
$p$-box  and $\lpr_{(\ldf,\udf)}$ the lower probability it induces
on $\mathcal{E}$ by means of Eq.~\eqref{eq:lpr-from-pbox}.
\begin{itemize}
 \item[(a)] Let $P$ be (the restriction to $\mathcal{E}$ of) a linear prevision on
 $\gambles(\realsexp\times\realsexp)$, and let $F_P$ be its associated
 distribution function given by $F_P(x,y)=P(A_{(x,y)})$ for every
 $x,y\in\realsexp$. Then
 \begin{equation*}
  P(A)\geq\lpr_{(\ldf,\udf)}(A) \ \forall A\in{\mathcal E} \iff \ldf\leq F_P\leq \udf.
 \end{equation*}
 \item[(b)] Conversely, let $F$ be a distribution function on $\realsexp\times\realsexp$, and let $\pr_{\df}:{\mathcal E}\rightarrow [0,1]$
 be the functional given by $\pr_{\df}(A_{(x,y)})=\df(x,y),
 \pr_{\df}(A_{(x,y)}^c)=1-\df(x,y)$ for every $x,y\in\realsexp$.
 Then $$\ldf\leq F\leq\udf \iff P_F(A)\geq\lpr_{(\ldf,\udf)}(A) \ \forall A\in{\mathcal E}.$$
\end{itemize}
\end{lemma}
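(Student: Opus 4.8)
The plan is to prove both biconditionals by reducing everything to the two elementary equivalences stated in the footnote, namely that for a linear prevision $P$ with associated distribution function $\df_{\pr}$ we have $\pr(A_{(x,y)}) \geq \lpr_{(\ldf,\udf)}(A_{(x,y)}) \iff \df_{\pr}(x,y) \geq \ldf(x,y)$ and $\pr(A_{(x,y)}^c) \geq \lpr_{(\ldf,\udf)}(A_{(x,y)}^c) \iff \df_{\pr}(x,y) \leq \udf(x,y)$. Since the domain $\mathcal{E}$ is exactly $\mathcal{D}\cup\mathcal{D}_c$, a statement ``$P(A)\geq\lpr_{(\ldf,\udf)}(A)$ for all $A\in\mathcal{E}$'' splits into the conjunction of two families of inequalities, one indexed over the sets $A_{(x,y)}\in\mathcal{D}$ and one over the complements $A_{(x,y)}^c\in\mathcal{D}_c$, so the strategy is to handle these two families separately and then recombine.

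For part (a), I would start from the right-hand side $\ldf\leq F_P\leq\udf$, which by definition means $\ldf(x,y)\leq F_P(x,y)$ and $F_P(x,y)\leq\udf(x,y)$ for every $x,y\in\realsexp$. First I would unfold the definition $F_P(x,y)=P(A_{(x,y)})$ together with Eq.~\eqref{eq:lpr-from-pbox}, so that $\ldf(x,y)\leq F_P(x,y)$ reads precisely as $\lpr_{(\ldf,\udf)}(A_{(x,y)})\leq P(A_{(x,y)})$, giving the inequality on every set in $\mathcal{D}$. Next, for the complements, I would use that $P$ is a linear (hence finitely additive) prevision, so $P(A_{(x,y)}^c)=1-P(A_{(x,y)})=1-F_P(x,y)$; combining this with $\lpr_{(\ldf,\udf)}(A_{(x,y)}^c)=1-\udf(x,y)$, the inequality $P(A_{(x,y)}^c)\geq\lpr_{(\ldf,\udf)}(A_{(x,y)}^c)$ becomes $1-F_P(x,y)\geq 1-\udf(x,y)$, i.e. $F_P(x,y)\leq\udf(x,y)$. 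Thus the two halves of $\ldf\leq F_P\leq\udf$ correspond exactly to the inequalities on $\mathcal{D}$ and on $\mathcal{D}_c$ respectively, and since $\mathcal{E}=\mathcal{D}\cup\mathcal{D}_c$ their conjunction is equivalent to $P(A)\geq\lpr_{(\ldf,\udf)}(A)$ for all $A\in\mathcal{E}$, which is what we want.

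Part (b) is structurally the same computation read in the opposite direction: given a distribution function $F$, the functional $\pr_{\df}$ is defined on $\mathcal{E}$ by $\pr_{\df}(A_{(x,y)})=\df(x,y)$ and $\pr_{\df}(A_{(x,y)}^c)=1-\df(x,y)$, which is exactly the restriction to $\mathcal{E}$ of the precise model with distribution function $F$. Hence I would observe that $\pr_{\df}$ plays the role of $P$ in part (a) with $F_{\pr_{\df}}=\df$, and the equivalence $\ldf\leq F\leq\udf\iff P_F(A)\geq\lpr_{(\ldf,\udf)}(A)$ for all $A\in\mathcal{E}$ follows by the identical unfolding: the lower-bound inequalities on $\mathcal{D}$ encode $\ldf\leq\df$, while the inequalities on $\mathcal{D}_c$, using $\pr_{\df}(A_{(x,y)}^c)=1-\df(x,y)$ against $1-\udf(x,y)$, encode $\df\leq\udf$.

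The proof is essentially a bookkeeping argument with no deep obstacle; the only point requiring a little care is ensuring that the functionals in question really are additive on the relevant complementary pairs, so that $P(A_{(x,y)}^c)=1-P(A_{(x,y)})$ can be invoked. In part (a) this is immediate because $P$ is a genuine linear prevision, hence finitely additive, and its restriction to $\mathcal{E}$ satisfies the needed relation; in part (b) it is built into the very definition of $\pr_{\df}$ through the pairing $\pr_{\df}(A_{(x,y)})+\pr_{\df}(A_{(x,y)}^c)=1$. Once this additivity is in hand, the equivalences follow by direct substitution, so the main (and essentially only) step is to verify the correspondence on complements carefully and to note that every element of $\mathcal{E}$ is either some $A_{(x,y)}$ or some $A_{(x,y)}^c$, so that checking the two families exhausts the domain.
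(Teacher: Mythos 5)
Your proposal is correct and follows essentially the same route as the paper, which proves this lemma via the footnoted sketch: establish the two pointwise equivalences $\pr(A_{(x,y)})\geq\lpr_{(\ldf,\udf)}(A_{(x,y)})\iff \df_{\pr}(x,y)\geq\ldf(x,y)$ and $\pr(A_{(x,y)}^c)\geq\lpr_{(\ldf,\udf)}(A_{(x,y)}^c)\iff \df_{\pr}(x,y)\leq\udf(x,y)$, then exhaust $\mathcal{E}=\mathcal{D}\cup\mathcal{D}_c$. Your write-up merely makes explicit the additivity step $P(A_{(x,y)}^c)=1-P(A_{(x,y)})$ (genuine linearity in part (a), built into the definition of $P_F$ in part (b)) that the paper's footnote leaves implicit as ``follow easily''.
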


Given a bivariate $p$-box $(\ldf,\udf)$,
Lemma \ref{le:dominating sets-correspondence} implies that the coherence
of its associated lower probability $\lpr_{(\ldf,\udf)}$
can be characterised through a set of distribution functions:

\begin{proposition}\cite{pelessoni2014}\label{pr:coherence-correspondence}
The lower probability $\lpr_{(\ldf,\udf)}$ induced by the bivariate
$p$-box $(\ldf,\udf)$ by means of Eq.~\eqref{eq:lpr-from-pbox} is
coherent if and only if $\ldf$ (resp., $\udf$) is the lower (resp.,
upper) envelope of the set
\begin{equation}
\label{eq:interval_distributions}
 {\mathcal{F}}=\{\df:\realsexp\times\realsexp\rightarrow [0,1] \text{ distribution function}: \ldf\leq\df\leq\udf\}.
\end{equation}
If $\lpr_{(\ldf,\udf)}$ is coherent, the following conditions hold
for every $x_1 \leq x_2\in\realsexp$ and $y_1\leq y_2\in\realsexp$:
\begin{align}
\label{eq:cond1}\ldf(x_2,y_2)+\udf(x_1,y_1)-\ldf(x_1,y_2)-\ldf(x_2,y_1)\geq0.\tag{I-RI1}\\
\label{eq:cond2}\udf(x_2,y_2)+\ldf(x_1,y_1)-\ldf(x_1,y_2)-\ldf(x_2,y_1)\geq0.\tag{I-RI2}\\
\label{eq:cond3}\udf(x_2,y_2)+\udf(x_1,y_1)-\udf(x_1,y_2)-\ldf(x_2,y_1)\geq0.\tag{I-RI3}\\
\label{eq:cond4}\udf(x_2,y_2)+\udf(x_1,y_1)-\ldf(x_1,y_2)-\udf(x_2,y_1)\geq0.\tag{I-RI4}
\end{align}
\end{proposition}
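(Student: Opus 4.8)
The plan is to prove both directions of the equivalence using the correspondence established in Lemma~\ref{le:dominating sets-correspondence}, and then to derive the four inequalities as necessary consequences of coherence.

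**Setting up the main equivalence.** First I would recall the sensitivity analysis characterisation of coherence recorded after Eq.~\eqref{eq:credal-set}: a lower prevision is coherent if and only if it is the lower envelope of its credal set $\solp(\lpr_{(\ldf,\udf)})$. Applied to the domain $\mathcal{E}$, this says that $\lpr_{(\ldf,\udf)}$ is coherent exactly when, for every $A\in\mathcal{E}$, we have $\lpr_{(\ldf,\udf)}(A)=\min\{P(A):P\in\solp(\lpr_{(\ldf,\udf)})\}$, the minimum being attained. The key now is to translate the credal set of linear previsions dominating $\lpr_{(\ldf,\udf)}$ on $\mathcal{E}$ into the set $\mathcal{F}$ of distribution functions sandwiched between $\ldf$ and $\udf$. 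This is precisely what Lemma~\ref{le:dominating sets-correspondence} provides: part~(a) shows that any linear prevision $P$ dominating $\lpr_{(\ldf,\udf)}$ on $\mathcal{E}$ has its distribution function $F_P$ lying in $\mathcal{F}$, while part~(b) shows the converse, that every $F\in\mathcal{F}$ yields a functional $P_F$ on $\mathcal{E}$ dominating $\lpr_{(\ldf,\udf)}$.

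**Proving the two directions.** For the ``if'' direction, I would assume $\ldf$ and $\udf$ are respectively the lower and upper envelopes of $\mathcal{F}$. Then for each $A_{(x,y)}\in\mathcal{D}$, the value $\ldf(x,y)=\inf_{F\in\mathcal{F}}F(x,y)$ equals $\inf_{P}P(A_{(x,y)})$ over the dominating previsions, by Lemma~\ref{le:dominating sets-correspondence}; similarly on $\mathcal{D}_c$ the value $1-\udf(x,y)$ is recovered as an infimum over dominating previsions. Hence $\lpr_{(\ldf,\udf)}$ coincides with the lower envelope of its credal set on all of $\mathcal{E}$, which is the coherence condition. For the ``only if'' direction, coherence gives that $\lpr_{(\ldf,\udf)}$ is the lower envelope of $\solp(\lpr_{(\ldf,\udf)})$; applying Lemma~\ref{le:dominating sets-correspondence} again converts this envelope over previsions into the envelope over $\mathcal{F}$, forcing $\ldf$ and $\udf$ to be the lower and upper envelopes of $\mathcal{F}$.

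**The four inequalities.** Once coherence is established, the inequalities \eqref{eq:cond1}--\eqref{eq:cond4} should follow by testing coherence (specifically superadditivity~(C3), or its 2-monotonicity-type consequences) against suitably chosen gambles built from the indicators of the rectangle corners $A_{(x_1,y_1)},A_{(x_1,y_2)},A_{(x_2,y_1)},A_{(x_2,y_2)}$ and their complements. The idea is that each inequality corresponds to a particular rectangle-increment (rectangle-inequality) constraint in which some corners are evaluated with $\ldf$ and others with $\udf$; writing the relevant indicator combination as a sum of gambles and applying the coherence axioms yields the claimed sign. Concretely, for each of the four patterns of $\ldf/\udf$ assignments I would exhibit the gamble whose coherent lower prevision, expanded via Eq.~\eqref{eq:lpr-from-pbox}, produces exactly that inequality.

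**Main obstacle.** The routine part is the envelope translation, which Lemma~\ref{le:dominating sets-correspondence} essentially hands us. The genuinely delicate step is the careful bookkeeping in deriving the four inequalities: one must choose, for each inequality, the correct indicator gamble and verify that the mixed $\ldf/\udf$ evaluation at the corners arises naturally from applying coherence to that gamble, rather than some other combination. Getting the orientation of each rectangle corner right—and confirming that the chosen gamble is admissible for the coherence argument—will require the most attention.
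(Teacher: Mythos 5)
Note first that the paper does not actually prove this proposition: it is quoted from \cite{pelessoni2014}, and the only scaffolding the paper provides is Lemma~\ref{le:dominating sets-correspondence} together with the credal-set characterisation of coherence in Eq.~\eqref{eq:credal-set}. Your proof of the equivalence uses exactly that scaffolding, and in outline it is the right argument. There is, however, one real gap in your ``if'' direction: Lemma~\ref{le:dominating sets-correspondence}(b) only gives you a \emph{functional} $P_F$ on the domain $\mathcal{E}$ that dominates $\lpr_{(\ldf,\udf)}$ there, whereas the credal set $\solp(\lpr_{(\ldf,\udf)})$ consists of linear previsions on all of $\gambles(\realsexp\times\realsexp)$. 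To conclude that $\lpr_{(\ldf,\udf)}$ is the lower envelope of $\solp(\lpr_{(\ldf,\udf)})$ you must also argue that each $P_F$ is the restriction to $\mathcal{E}$ of some linear prevision, i.e.\ that every $F\in\mathcal{F}$ induces a finitely additive probability on the field generated by the sets $A_{(x,y)}$ (the rectangle inequality provides the non-negative increments needed for this), which then extends to a linear prevision. This step is standard but cannot be skipped: without it the infimum over the credal set could a priori be strictly larger than $\inf_{F\in\mathcal{F}}F(x,y)$, and coherence would not follow.

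The second issue concerns the four inequalities \eqref{eq:cond1}--\eqref{eq:cond4}, where your text is a plan rather than a proof: no gamble is actually exhibited, and you yourself flag the bookkeeping as unresolved. The plan is workable (for instance \eqref{eq:cond2} follows from Walley's coherence condition applied to $f_1=A_{(x_1,y_2)}$, $f_2=A_{(x_2,y_1)}$, $f_3=A_{(x_2,y_2)}^c$, $f_0=A_{(x_1,y_1)}$, using the pointwise inequality $A_{(x_1,y_2)}+A_{(x_2,y_1)}\leq A_{(x_1,y_1)}+A_{(x_2,y_2)}$), but you are missing that the first part of the proposition already hands you a one-line argument. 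If $\lpr_{(\ldf,\udf)}$ is coherent, then $\ldf=\inf\mathcal{F}$ and $\udf=\sup\mathcal{F}$ pointwise; so, given $\epsilon>0$, pick $F\in\mathcal{F}$ with $F(x_2,y_2)\leq\ldf(x_2,y_2)+\epsilon$ and chain
\begin{equation*}
\ldf(x_2,y_2)+\epsilon+\udf(x_1,y_1)\;\geq\; F(x_2,y_2)+F(x_1,y_1)\;\geq\; F(x_1,y_2)+F(x_2,y_1)\;\geq\; \ldf(x_1,y_2)+\ldf(x_2,y_1),
\end{equation*}
where the middle step is the rectangle inequality for $F$ and the outer steps are the sandwich bounds $\ldf\leq F\leq\udf$; letting $\epsilon\to0$ gives \eqref{eq:cond1}. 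The other three inequalities follow identically by choosing $F$ to nearly attain $\ldf(x_1,y_1)$, $\udf(x_1,y_2)$ and $\udf(x_2,y_1)$, respectively. This replaces all the delicate indicator bookkeeping by the 2-increasing property of a single well-chosen element of $\mathcal{F}$.
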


\begin{definition}
\label{def:coherence_p-box} A bivariate $p$-box $(\ldf,\udf)$ is
\emph{coherent} whenever the lower probability $\lpr_{(\ldf,\udf)}$
it induces on $\mathcal{E}$ by means of Eq.~\eqref{eq:lpr-from-pbox}
is coherent.
\end{definition}

\subsection{Copulas}

In this paper, we are going to study to what extent bivariate
$p$-boxes can be expressed as a function of their marginals. In the
precise case (that is, when we have only one bivariate distribution
function), this is done through the notion of copula.

\begin{definition}\cite{nelsen1999}\label{de:copula}
A function $C:[0,1]\times[0,1]\rightarrow [0,1]$ is called a {\em
copula} when it satisfies the following conditions:
\begin{align}{}
 \label{eq:cop_1} &C(0,u)=C(u,0)=0\ \forall u\in[0,1] \tag{COP1}.\\
 \label{eq:cop_2} &C(1,u)=C(u,1)=u\ \forall u\in[0,1]\tag{COP2}.\\
 \label{eq:cop_3} &C(u_2,v_2)+C(u_1,v_1)-C(u_1,v_2)-C(u_2,v_1)\geq 0\ \forall
 u_1\leq u_2, v_1\leq v_2\in [0,1]\tag{COP3}.
\end{align}
\end{definition}

It follows from the definition above that a copula is component-wise
monotone increasing. One of the main features of copulas lies in
Sklar's theorem:

\begin{theorem}[\cite{sklar1959}{{\bf , Sklar's Theorem}}]\label{th:sklar-precise}
Let $\df_{\rm (X,Y)}:\realsexp\times\realsexp\rightarrow [0,1]$ be a
bivariate distribution function with marginals $\df_{\rm
X}:\realsexp\rightarrow[0,1]$ and $\df_{\rm
Y}:\realsexp\rightarrow[0,1]$, defined by $\df_{\rm X}(x)=\df_{\rm
(X,Y)}(x,+\infty)$ and $\df_{\rm Y}(y)=\df_{\rm (X,Y)}(+\infty,y)$
for any $x$ and $y$ in $\realsexp$. Then there is a copula $C$ such
that
\[
F_{\rm (X,Y)}(x,y)=C(F_{\rm X}(x),F_{\rm Y}(y)) \mbox{ for all
}(x,y)\in\realsexp\times\realsexp.
\]
Conversely, any transformation of marginal distribution functions by
means of a copula produces a bivariate distribution function.
\end{theorem}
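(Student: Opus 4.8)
The plan is to prove the two directions of Sklar's theorem separately, treating the converse (``gluing'' direction) first since it is the easier of the two and is exactly what we will most often invoke later in the imprecise setting. For the converse, I would start with an arbitrary copula $C$ and arbitrary univariate distribution functions $\df_{\rm X},\df_{\rm Y}$, and define $\df_{\rm (X,Y)}(x,y):=C(\df_{\rm X}(x),\df_{\rm Y}(y))$. One then checks directly that this map is standardized and satisfies the rectangle inequality from the definition of a bivariate distribution function. The boundary conditions follow immediately from (COP1)--(COP2) together with $\df_{\rm X}(-\infty)=\df_{\rm Y}(-\infty)=0$ and $\df_{\rm X}(+\infty)=\df_{\rm Y}(+\infty)=1$; component-wise monotonicity follows from the monotonicity of $C$ in each argument (itself a consequence of (COP3)) composed with the monotonicity of $\df_{\rm X},\df_{\rm Y}$. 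The rectangle inequality is the one genuine computation: writing $u_i=\df_{\rm X}(x_i)$ and $v_j=\df_{\rm Y}(y_j)$, the four-point increment of $\df_{\rm (X,Y)}$ over $[x_1,x_2]\times[y_1,y_2]$ equals the four-point increment of $C$ over $[u_1,u_2]\times[v_1,v_2]$, which is nonnegative by (COP3) because $x_1\leq x_2$ and $y_1\leq y_2$ force $u_1\leq u_2$ and $v_1\leq v_2$.

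For the direct part, I would fix a bivariate distribution function $\df_{\rm (X,Y)}$ with marginals $\df_{\rm X},\df_{\rm Y}$ and seek a copula $C$ with $\df_{\rm (X,Y)}(x,y)=C(\df_{\rm X}(x),\df_{\rm Y}(y))$. The natural approach is to define $C$ first on the range of the marginals and then extend it to all of $[0,1]^2$. Concretely, for a point $(u,v)$ with $u=\df_{\rm X}(x)$ and $v=\df_{\rm Y}(y)$ I would set $C(u,v):=\df_{\rm (X,Y)}(x,y)$. The first thing to verify is that this is \emph{well defined}: if $\df_{\rm X}(x)=\df_{\rm X}(x')$ then $\df_{\rm (X,Y)}(x,y)=\df_{\rm (X,Y)}(x',y)$, which follows from the rectangle inequality applied to the degenerate rectangle (the increment over a strip of zero marginal probability vanishes), and symmetrically in the second coordinate. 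On the set $\mathrm{Ran}(\df_{\rm X})\times\mathrm{Ran}(\df_{\rm Y})$ the copula axioms (COP1)--(COP3) then transfer directly from the boundary and rectangle properties of $\df_{\rm (X,Y)}$.

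The main obstacle is the extension of $C$ from $\mathrm{Ran}(\df_{\rm X})\times\mathrm{Ran}(\df_{\rm Y})$ to the whole unit square, since the marginals need not be continuous or surjective and their ranges may omit intervals. The standard device is to fill in the gaps by \emph{bilinear interpolation}: on each missing rectangle one interpolates linearly in each coordinate between the already-defined boundary values. I would verify that this interpolation preserves the $2$-increasing property (COP3)---this is the delicate check, as one must confirm that the interpolated increments remain nonnegative across the seams between an interpolated cell and an already-defined point---and that it respects the boundary conditions (COP1)--(COP2), which hold because the marginals attain the values $0$ and $1$ at $\pm\infty$ so that the extreme rows and columns of the square are already pinned down. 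I would finally record that this extension need not be unique (any copula agreeing with $\df_{\rm (X,Y)}$ on the marginal ranges works), which is why the theorem asserts existence rather than uniqueness; uniqueness holds precisely when both marginals are continuous. Throughout, the only real labour is the monotonicity bookkeeping, so I would state the interpolation construction and cite \cite{nelsen1999} for the routine verification rather than grinding through every case.
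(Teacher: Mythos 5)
The paper offers no proof of this theorem: it is a classical result imported by citation to \cite{sklar1959}, with \cite{nelsen1999} as the standard reference for the argument, and your proposal reproduces exactly that canonical proof --- the gluing direction by checking that the rectangle increment of $C(\df_{\rm X}(x),\df_{\rm Y}(y))$ equals a $C$-increment (nonnegative by (COP3)), and the direct part by defining a subcopula on $\mathrm{Ran}(\df_{\rm X})\times\mathrm{Ran}(\df_{\rm Y})$, checking well-definedness via the degenerate-strip argument, and extending by bilinear interpolation. Your outline is correct; the only step you elide is that the subcopula must first be extended to the \emph{closure} of the ranges (using its Lipschitz property) so that the corner values needed for the interpolation exist, a point contained in the routine verification you explicitly defer to \cite{nelsen1999}.
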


Any copula $C$ must satisfy the {\em Fr\'{e}chet-Hoeffding bounds}
(see \cite{frechet1935,williamson1989}):
\begin{equation}\label{eq:Frechet-bounds}
 C_{\rm L}(u,v):=\max\{u+v-1,0\}\leq C(u,v)\leq \min\{u,v\}:=C_{\rm M}(u,v)
\end{equation}
for every $u,v\in[0,1]$. $C_{\rm L}$ is called the {\em
{\L}ukasiewicz copula} and $C_{\rm M}$ the {\em minimum copula}.
Eq.~\eqref{eq:Frechet-bounds} applies in particular to one instance
of copulas that shall be of interest in this paper: the {\em product
copula} $C_{\rm P}$, given by $C_{\rm P}(u,v)=u\cdot v$ for every
$u,v\in[0,1]$. It holds that two random variables $X,Y$ are
stochastically independent if and only if their distribution functions are coupled
by the product copula.

For an in-depth review on copulas we refer to \cite{nelsen1999}.

\section{Combining marginal $p$-boxes into a bivariate one}

One particular context where bivariate $p$-boxes can arise is in the
joint extension of two marginal $p$-boxes. In this section, we
explore this case in detail, studying in particular the properties
of some bivariate $p$-boxes with given marginals: the largest one,
that shall be obtained by means of the Fr\'{e}chet bounds and the
notion of natural extension, and the one modelling the notion of
independence. In both cases, we shall see that the bivariate model
can be derived by means of an appropriate extension of the notion of
copula.

Related results can be found in \cite[Section~7]{troffaes2011}, with
one fundamental difference: in \cite{troffaes2011}, the authors use
the existence of a total preorder on the product space (in the case
of this paper, $\realsexp\times\realsexp$) that is compatible with
the orders in the marginal spaces, and reduce the multivariate p-box
to a univariate one. Here we do no such reduction, and we consider
only a partial order: the product order, given by
\begin{equation*}
 (x_1,y_1) \leq (x_2,y_2) \Leftrightarrow x_1 \leq x_2 \text{ and }
 y_1 \leq y_2.
\end{equation*}
Another related study was made by Yager in \cite{Yager13},
considering the case in which the marginal distributions are not
precisely described and are defined by means of Dempster-Shafer
belief structures instead. He modelled this situation by considering
copulas whose arguments are intervals (the ones determined by the
Demspter-Shafer models) instead of crisp numbers, and whose images
are also intervals. He showed then that the lower (resp., upper)
bound of the interval of images corresponds to the copula evaluated
in the lower (resp., upper) bounds of the intervals. This can be
seen as a particular case of our subsequent
Proposition~\ref{prop:Imp_Sklar2}.

\subsection{A generalization of Sklar's theorem}\label{sec:bivariate-sklar}
Let us study to which extent Sklar's theorem can be generalised to a
context of imprecision, both in the marginal distribution functions
to be combined and in the copula that links them. 
In order to tackle this problem, we introduce the notion of
imprecise copula:

\begin{definition}\label{def:impr_cop}
A pair $(\lcop,\ucop)$ of functions
$\lcop,\ucop:[0,1]\times[0,1]\rightarrow[0,1]$ is called an {\em
imprecise copula} if:
\begin{itemize}
\item $\lcop(0,u)=\lcop(u,0)=0$, $\lcop(1,u)=\lcop(u,1)=u$ $\forall u\in[0,1]$.
\item $\ucop(0,u)=\ucop(u,0)=0$, $\ucop(1,u)=\ucop(u,1)=u$ $\forall u\in[0,1]$.
\item For any $u_1\leq u_2, v_1\leq v_2$:
\begin{align}{}
\label{eq:ineq_cop_1}\lcop(u_2,v_2)+\ucop(u_1,v_1)-\lcop(u_1,v_2)-\lcop(u_2,v_1)\geq 0.\tag{CI-1}\\
\label{eq:ineq_cop_2}\ucop(u_2,v_2)+\lcop(u_1,v_1)-\lcop(u_1,v_2)-\lcop(u_2,v_1)\geq 0.\tag{CI-2}\\
\label{eq:ineq_cop_3}\ucop(u_2,v_2)+\ucop(u_1,v_1)-\ucop(u_1,v_2)-\lcop(u_2,v_1)\geq 0.\tag{CI-3}\\
\label{eq:ineq_cop_4}\ucop(u_2,v_2)+\ucop(u_1,v_1)-\lcop(u_1,v_2)-\ucop(u_2,v_1)\geq 0.\tag{CI-4}
\end{align}
\end{itemize}
\end{definition}





We are using the terminology imprecise copula in the definition
above because we intend it as a mathematical model for the imprecise
knowledge of a copula; note however that the lower and upper
functions $\lcop,\ucop$ need not be copulas themselves, because they
may not
satisfy the 2-increasing property (COP3). 

\eqref{eq:ineq_cop_1}$\div$\eqref{eq:ineq_cop_4} are useful in
establishing the following properties of imprecise copulas.
\begin{proposition}
\label{pro:copula_properties} Let $(\lcop,\ucop)$ be an imprecise
copula.
\begin{itemize}
\item[(a)] $\lcop\leq\ucop$.
\item[(b)] $\lcop$ and $\ucop$ are component-wise increasing.
\item[(c)] The \emph{Lipschitz condition}
\begin{equation}
\label{eq:Lipschitz}
|C(u_2,v_2)-C(u_1,v_1)|\leq |u_2-u_1|+|v_2-v_1|
\ \forall u_1, u_2, v_1, v_2\in [0,1]
\end{equation}
is satisfied both by $C=\lcop$ and by $C=\ucop$.
\item[(d)] The pointwise infimum and supremum of a non-empty set of copulas $\mathcal{C}$ form an imprecise copula.
\end{itemize}
\end{proposition}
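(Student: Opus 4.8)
The plan is to prove the four claims in sequence, exploiting the four inequalities (CI-1)--(CI-4) together with the boundary conditions on $\lcop,\ucop$, and to treat part (d) somewhat separately, since it is a statement about arbitrary sets of genuine copulas rather than a consequence of the abstract axioms.

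For part (a), the natural approach is to derive $\lcop\leq\ucop$ from a suitable specialisation of one of the four inequalities. I would set $u_1=u_2=u$ and $v_1=v_2=v$ (or a mixed degenerate choice) so that the four terms collapse; for instance, feeding $u_1=u_2$, $v_1=v_2$ into \eqref{eq:ineq_cop_2} should leave an inequality of the form $\ucop(u,v)-\lcop(u,v)\geq 0$ after the boundary terms cancel, and I would verify this reduction carefully. For part (b), monotonicity of each of $\lcop$ and $\ucop$ in each argument should follow by fixing one of the rectangle's sides to a boundary value of $[0,1]$: choosing $u_1=0$ (so that $\lcop(0,\cdot)=\ucop(0,\cdot)=0$) in \eqref{eq:ineq_cop_2} and \eqref{eq:ineq_cop_3} ought to yield $\lcop(u_2,v_2)\geq\lcop(u_2,v_1)$ and the corresponding statement for $\ucop$, and symmetrically in the other coordinate. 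The key is to pick, for each of the two functions and each of the two coordinates, the inequality whose degenerate form isolates exactly the desired difference.

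For part (c), the Lipschitz condition, I would reduce the general rectangle inequality to the two ``axis-parallel'' increments and combine them by the triangle inequality, since $|C(u_2,v_2)-C(u_1,v_1)|\leq |C(u_2,v_2)-C(u_1,v_2)|+|C(u_1,v_2)-C(u_1,v_1)|$. It therefore suffices to bound a one-coordinate increment, say $0\leq C(u_2,v)-C(u_1,v)\leq u_2-u_1$ for $u_1\leq u_2$ (the lower bound being monotonicity from part (b)); the upper bound should come from applying an appropriate inequality among (CI-1)--(CI-4) with $v_1=1$ or $v_2=1$, invoking the boundary values $\lcop(u,1)=\ucop(u,1)=u$ so that the non-increment terms telescope to $u_2-u_1$. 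I would do this for $C=\lcop$ and $C=\ucop$ separately, each time selecting the inequality whose structure leaves the correct single function on both sides.

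For part (d), given a non-empty set $\C$ of copulas, I would set $\lcop(u,v)=\inf_{C\in\C}C(u,v)$ and $\ucop(u,v)=\sup_{C\in\C}C(u,v)$ and check that this pair satisfies Definition~\ref{def:impr_cop}. The boundary conditions are immediate since every $C\in\C$ satisfies (COP1)--(COP2) pointwise, so the infimum and supremum inherit the exact boundary values. The main obstacle is verifying \eqref{eq:ineq_cop_1}--\eqref{eq:ineq_cop_4}: each of these mixes $\lcop$ and $\ucop$, so I cannot simply apply (COP3) to one fixed copula. The idea is that for each fixed $C\in\C$, the 2-increasing property \eqref{eq:cop_3} gives $C(u_2,v_2)+C(u_1,v_1)\geq C(u_1,v_2)+C(u_2,v_1)$; I would then bound the left-hand terms below by $\lcop$ or $\ucop$ as appropriate and the right-hand terms above, taking infima or suprema over $C$ on each side in a way compatible with the sign of each term. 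Concretely, to obtain \eqref{eq:ineq_cop_1} I would take the infimum defining $\lcop(u_2,v_2)$ and pair it against the majorising $\ucop(u_1,v_1)$, using that for the infimizing sequence of $C$ the subtracted terms $C(u_1,v_2),C(u_2,v_1)$ are dominated by $\ucop$ is \emph{not} quite right---so the careful point is to apply (COP3) to a single well-chosen $C$ realising (or approximating) the relevant extremum and then relax only the remaining terms monotonically. I expect this last bookkeeping, matching each of the four signed terms to the correct envelope, to be the delicate part of the argument.
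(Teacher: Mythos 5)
Your plan follows the paper's proof almost step for step: parts (a)--(c) by specialising the four inequalities at boundary values, part (d) by applying \eqref{eq:cop_3} to individual members of $\C$ and then relaxing terms to their envelopes. Three of your four parts are sound. For (a), the fully degenerate substitution $u_1=u_2$, $v_1=v_2$ in \eqref{eq:ineq_cop_2} indeed yields $\ucop(u,v)\geq\lcop(u,v)$ (the paper instead puts $u_2=u_1$ in \eqref{eq:ineq_cop_3}; both are fine). For (c), your triangle-inequality reduction is exactly the paper's; the instantiations that work are \eqref{eq:ineq_cop_2} with $v_2=1$ and with $u_2=1$ for $\lcop$, and \eqref{eq:ineq_cop_4} with $v_2=1$ together with \eqref{eq:ineq_cop_3} with $u_2=1$ for $\ucop$. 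For (d), your final formulation --- apply \eqref{eq:cop_3} to a single $C$ approximating the relevant extremum, then relax the remaining terms --- is a correct $\varepsilon$-version of the paper's argument, which for every $C\in\C$ chains $C(u_2,v_2)+C(u_1,v_1)\geq C(u_1,v_2)+C(u_2,v_1)\geq\lcop(u_1,v_2)+\lcop(u_2,v_1)$, replaces one left-hand term by its envelope bound, and takes the infimum (or, for \eqref{eq:ineq_cop_3} and \eqref{eq:ineq_cop_4}, the supremum) over $C$ in the term that remains; no approximating sequence is needed, but yours works.

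The genuine gap is in (b). Setting $u_1=0$ in \eqref{eq:ineq_cop_2} gives $\ucop(u_2,v_2)-\lcop(u_2,v_1)\geq 0$, and setting $u_1=0$ in \eqref{eq:ineq_cop_3} gives the very same mixed inequality $\ucop(u_2,v_2)\geq\lcop(u_2,v_1)$; neither isolates a single function, so neither proves that $\lcop$ or $\ucop$ is component-wise increasing. The structural reason is exactly your own selection principle, applied correctly: an inequality can isolate $\lcop$ (resp.\ $\ucop$) only if its unique term in the \emph{other} function sits at a corner that a boundary substitution can annihilate, i.e.\ at $(u_1,v_1)$, $(u_1,v_2)$ or $(u_2,v_1)$. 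In \eqref{eq:ineq_cop_2} the lone $\ucop$ term sits at $(u_2,v_2)$, which cannot be sent to the axes without collapsing the whole rectangle, so \eqref{eq:ineq_cop_2} is unusable for (b); likewise \eqref{eq:ineq_cop_3} with $u_1=0$ leaves its lone $\lcop$ term alive. The substitutions that do work (the paper's) are: \eqref{eq:ineq_cop_1} with $v_1=0$ and with $u_1=0$ for the two monotonicities of $\lcop$ (its lone $\ucop$ term is at $(u_1,v_1)$), and \eqref{eq:ineq_cop_4} with $u_1=0$ together with \eqref{eq:ineq_cop_3} with $v_1=0$ for $\ucop$ (their lone $\lcop$ terms are at $(u_1,v_2)$ and $(u_2,v_1)$ respectively). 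Note also that your (c) invokes (b) for the lower bound on the one-coordinate increments, so this gap propagates there; once (b) is repaired with the correct substitutions, the rest of your argument stands.
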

\begin{proof}
\begin{itemize}
\item[(a)] This follows from inequality \eqref{eq:ineq_cop_3}, with $u_2=u_1$.

\item[(b)] Use~\eqref{eq:ineq_cop_1} with,
alternatively, $v_1=0$ and $u_1=0$ to obtain, respectively,
\begin{align*}
\lcop(u_2,v_2)-\lcop(u_1,v_2)\geq 0& \ \forall v_2, u_1, u_2\in [0,1], s.t.\ u_1\leq u_2\\
\lcop(u_2,v_2)-\lcop(u_2,v_1)\geq 0& \ \forall u_2, v_1, v_2\in [0,1], s.t.\ v_1\leq v_2
\end{align*}
By these inequalities, $\lcop$ is component-wise increasing.
Analogously, to prove that $\ucop$ is component-wise increasing,
apply \eqref{eq:ineq_cop_4} with $u_1=0$ and \eqref{eq:ineq_cop_3}
with $v_1=0$.

\item[(c)] Applying twice \eqref{eq:ineq_cop_2}
and the boundary conditions in Definition \ref{de:copula},
first with $v_2=1$ and then with $u_2=1$,
we obtain, respectively,
\begin{align}
\label{eq:Lipschitz_intermediate_1}
\lcop(u_2,v_1)-\lcop(u_1,v_1)\leq u_2-u_1\\
\label{eq:Lipschitz_intermediate_2}
\lcop(u_1,v_2)-\lcop(u_1,v_1)\leq v_2-v_1.
\end{align}
Because they are derived from \eqref{eq:ineq_cop_2},
Eqs.~\eqref{eq:Lipschitz_intermediate_1}
and~\eqref{eq:Lipschitz_intermediate_2} hold, respectively, for any
$v_1, u_1, u_2\in [0,1]$ such that $u_1\leq u_2$, and for any $u_1,
v_1, v_2\in [0,1]$ such that $v_1\leq v_2$. In the general case,
Eq.~\eqref{eq:Lipschitz_intermediate_1} is replaced by
\begin{eqnarray*}
\label{eq:Lipschitz_intermediate_general_1}
|\lcop(u_2,v_1)-\lcop(u_1,v_1)|\leq |u_2-u_1|
\end{eqnarray*}
and similarly for Eq.~\eqref{eq:Lipschitz_intermediate_2}.
Therefore, for arbitrary $u_1, u_2, v_1$ and $v_2$ in $[0,1]$,
$|\lcop(u_2,v_2)-\lcop(u_1,v_1)|\leq
|\lcop(u_2,v_2)-\lcop(u_2,v_1)|+|\lcop(u_2,v_1)-\lcop(u_1,v_1)|\leq
|u_2-u_1|+|v_2-v_1|$, which proves the Lipschitz condition for
$\lcop$. The proof for $\ucop$ is similar (use \eqref{eq:ineq_cop_4}
with $v_2=1$ and \eqref{eq:ineq_cop_3} with $u_2=1$).

\item[(d)] The boundary conditions are trivial, so let us prove~\eqref{eq:ineq_cop_1}$\div$\eqref{eq:ineq_cop_4}.
Define $\lcop(x,y):=\inf_{C\in\mathcal{C}}C(x,y)$,
$\ucop(x,y):=\inf_{C\in\mathcal{C}}C(x,y)$. By applying
\eqref{eq:cop_3} to the copulas in $\mathcal{C}$, we get that, for
every $C\in\mathcal{C}$ and every $u_1\leq u_2, v_1\leq v_2\in
[0,1]$,
\begin{equation*}
C(u_2,v_2)+C(u_1,v_1)\geq C(u_1,v_2)+C(u_2,v_1)\geq \lcop(u_1,v_2)+\lcop(u_2,v_1).
\end{equation*}
From this we deduce that, for every $C\in\mathcal{C}$ and every
$u_1\leq u_2, v_1\leq v_2\in [0,1]$,
\begin{eqnarray*}
\ucop(u_2,v_2)+C(u_1,v_1)\geq\lcop(u_1,v_2)+\lcop(u_2,v_1), \\
C(u_2,v_2)+\ucop(u_1,v_1)\geq\lcop(u_1,v_2)+\lcop(u_2,v_1),
\end{eqnarray*}
whence \eqref{eq:ineq_cop_2} and \eqref{eq:ineq_cop_1} hold.

As for \eqref{eq:ineq_cop_3} and \eqref{eq:ineq_cop_4}, again from
\eqref{eq:cop_3}, we get that for every $C\in\mathcal{C}$ and every
$u_1\leq u_2, v_1\leq v_2\in [0,1]$,
\begin{equation*}
\label{eq:envelope_copule_3}
\ucop(u_2,v_2)+\ucop(u_1,v_1)\geq C(u_2,v_2)+C(u_1,v_1)\geq C(u_1,v_2)+C(u_2,v_1).
\end{equation*}
This implies that for every $C\in\mathcal{C}$ and every $u_1\leq
u_2, v_1\leq v_2\in [0,1]$,
\begin{eqnarray*}
\ucop(u_2,v_2)+\ucop(u_1,v_1)\geq C(u_1,v_2)+\lcop(u_2,v_1), \\
\ucop(u_2,v_2)+\ucop(u_1,v_1)\geq\lcop(u_1,v_2)+C(u_2,v_1),
\end{eqnarray*}
whence~\eqref{eq:ineq_cop_3} and~\eqref{eq:ineq_cop_4} hold.
$\qedhere$
\end{itemize}
\end{proof}
According to \cite[Corollary~2.3]{Nelsen04}, the pointwise infimum
and supremum of a set of copulas are also quasi-copulas (see
\cite{Nelsen05} for a study on the lattice structure of copulas). A
quasi-copula \cite{nelsen1999} is a binary operator satisfying
conditions (COP1), (COP2) in Definition~\ref{de:copula} and the
Lipschitz condition given by Eq.~\eqref{eq:Lipschitz}.

By Proposition \ref{pro:copula_properties}~(c), both $\lcop$ and
$\ucop$ in an imprecise copula $(\lcop,\ucop)$ are quasi-copulas.
Conversely, given two quasi-copulas $C_1$ and $C_2$ such that
$C_1\leq C_2$, $(C_1,C_2)$ may not be an imprecise copula. To see
that, it is enough to consider a proper quasi-copula $C$, i.e. a
quasi-copula which is not a copula (see for instance
\cite[Example~6.3]{nelsen1999}). Then, the pair $(C,C)$ is not an
imprecise copula because it does not satisfy the inequalities in
Definition~\ref{def:impr_cop}: in this case the inequalities all
reduce to \eqref{eq:cop_3}. We may then conclude that an imprecise
copula is formed by two quasi-copulas $C_1\leq C_2$, for which the
additional inequalities
\eqref{eq:ineq_cop_1}$\div$\eqref{eq:ineq_cop_4} hold.

The converse of item~(d) in this proposition is still an open
problem at this stage; it is formally equivalent to the
characterisation of coherent bivariate $p$-boxes studied in
detail in \cite{pelessoni2014}. So far, we have only established it
under some restrictions on the domains of the copulas. If it held,
then we could regard imprecise copulas as restrictions of
\emph{sets} of bivariate distribution functions of continuous random
variables with uniform marginals, similar to the situation for
precise copulas.

In the particular case when $\lcop=\ucop:=C$,
$(\lcop,\ucop)$ is an imprecise copula if and only if $C$ is a
copula. It is also immediate to establish the following:

\begin{proposition}\label{pr:largest-imprecise-copula}
Let $C_1$ and $C_2$ be two copulas such that $C_1\leq C_2$. Then,
$(C_1,C_2)$ forms an imprecise copula. In particular, $(C_{\rm
L},C_{\rm M})$ is the largest imprecise copula, in the sense that,
for any imprecise copula $(\lcop,\ucop)$, it holds that $C_{\rm
L}\leq \lcop\leq \ucop\leq C_{\rm M}$.
\end{proposition}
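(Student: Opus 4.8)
The statement has two parts. First, given two copulas $C_1\leq C_2$, show that $(C_1,C_2)$ forms an imprecise copula; second, identify $(C_{\rm L},C_{\rm M})$ as the largest such pair.

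For the first part, the boundary conditions in Definition~\ref{def:impr_cop} follow immediately from \eqref{eq:cop_1} and \eqref{eq:cop_2} applied to each copula. The work is in verifying \eqref{eq:ineq_cop_1}$\div$\eqref{eq:ineq_cop_4}. The plan is to exploit that each $C_i$ satisfies the $2$-increasing property \eqref{eq:cop_3}, and to use $C_1\leq C_2$ to swap individual copula-evaluations between the two. For concreteness, take \eqref{eq:ineq_cop_1}: I would start from the inequality $C_1(u_2,v_2)+C_1(u_1,v_1)\geq C_1(u_1,v_2)+C_1(u_2,v_1)$, which is just \eqref{eq:cop_3} for $C_1=\lcop$, and then replace the second term on the left, $C_1(u_1,v_1)=\lcop(u_1,v_1)$, by the larger quantity $C_2(u_1,v_1)=\ucop(u_1,v_1)$, preserving the inequality. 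This yields exactly \eqref{eq:ineq_cop_1}. The remaining three inequalities follow by the same mechanism, each time starting from \eqref{eq:cop_3} for the appropriate copula ($C_2$ for \eqref{eq:ineq_cop_3} and \eqref{eq:ineq_cop_4}, where $\ucop$ appears in the positive-coefficient positions) and replacing one negative-coefficient term $\ucop(\cdot)$ by the smaller $\lcop(\cdot)$, or one positive-coefficient term $\lcop(\cdot)$ by the larger $\ucop(\cdot)$. In fact this is precisely the argument already carried out in the proof of Proposition~\ref{pro:copula_properties}(d) with the singleton set $\mathcal{C}=\{C_1,C_2\}$, so I could alternatively just invoke that item, noting that $\inf$ and $\sup$ over $\{C_1,C_2\}$ recover $C_1$ and $C_2$ since $C_1\leq C_2$.

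For the second part, I would first observe that $C_{\rm L}$ and $C_{\rm M}$ are genuine copulas (the {\L}ukasiewicz and minimum copulas), so by the first part $(C_{\rm L},C_{\rm M})$ is an imprecise copula. It then remains to show it is the largest, i.e. that any imprecise copula $(\lcop,\ucop)$ satisfies $C_{\rm L}\leq\lcop\leq\ucop\leq C_{\rm M}$. The middle inequality $\lcop\leq\ucop$ is Proposition~\ref{pro:copula_properties}(a). For the outer inequalities, the cleanest route is to show that any function satisfying the boundary conditions and the Lipschitz condition \eqref{eq:Lipschitz}---that is, any quasi-copula---is already trapped between the Fr\'echet--Hoeffding bounds. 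By Proposition~\ref{pro:copula_properties}(c), both $\lcop$ and $\ucop$ are quasi-copulas, so this would give the two outer inequalities at once.

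The main obstacle is establishing $C_{\rm L}\leq Q\leq C_{\rm M}$ for a quasi-copula $Q$ purely from the boundary conditions and Lipschitz property. For the upper bound, monotonicity (Proposition~\ref{pro:copula_properties}(b)) together with $Q(u,1)=u$ and $Q(1,v)=v$ gives $Q(u,v)\leq Q(u,1)=u$ and $Q(u,v)\leq Q(1,v)=v$, hence $Q(u,v)\leq\min\{u,v\}=C_{\rm M}(u,v)$. For the lower bound, I would use the Lipschitz condition against the corner $(1,1)$: since $Q(1,1)=1$, \eqref{eq:Lipschitz} gives $1-Q(u,v)=|Q(1,1)-Q(u,v)|\leq(1-u)+(1-v)$, i.e. $Q(u,v)\geq u+v-1$; combined with $Q\geq 0$ this yields $Q(u,v)\geq\max\{u+v-1,0\}=C_{\rm L}(u,v)$. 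Applying both bounds to $\lcop$ and $\ucop$ completes the argument. I expect no real difficulty here beyond choosing the right corner for the Lipschitz estimate; everything reduces to the already-proved properties (b) and (c) of Proposition~\ref{pro:copula_properties}.
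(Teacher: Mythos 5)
Your proof is correct and follows essentially the same route as the paper: the upper bound $\ucop\leq C_{\rm M}$ via monotonicity (Proposition~\ref{pro:copula_properties}(b)) and the boundary conditions, the middle inequality via Proposition~\ref{pro:copula_properties}(a), and the lower bound from an inequality that ultimately stems from \eqref{eq:ineq_cop_2} --- the paper applies \eqref{eq:ineq_cop_2} directly at $(u_2,v_2)=(1,1)$, whereas you route through the Lipschitz property of Proposition~\ref{pro:copula_properties}(c), which was itself derived from \eqref{eq:ineq_cop_2}, so the two are the same computation one step apart. The only other (cosmetic) differences are that the paper dismisses the first claim as simple to check, while you verify it explicitly or invoke Proposition~\ref{pro:copula_properties}(d) with $\mathcal{C}=\{C_1,C_2\}$ (a two-element set, not a singleton as you wrote), and that your upper-bound step actually uses monotonicity from Proposition~\ref{pro:copula_properties}(b) rather than only the quasi-copula (boundary plus Lipschitz) properties, which is fine since both $\lcop$ and $\ucop$ have that property.
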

\begin{proof}
It is simple to check that $(C_1,C_2)$ satisfies Definition
\ref{def:impr_cop}. The proof of the remaining part is similar to
that of the Fr\'{e}chet-Hoeffding inequalities. Consider an
imprecise copula $(\lcop,\ucop)$. Since $\ucop$ is component-wise
increasing by Proposition \ref{pro:copula_properties}~(b), and
applying the boundary conditions,
\[
\begin{array}{l}
\ucop(u,v)\leq \min(\ucop(u,1),\ucop(1,v))=\min(u,v).
\end{array}
\]
Using~\eqref{eq:ineq_cop_2} we deduce that:
\[
1+\lcop(u,v)=\ucop(1,1)+\lcop(u,v)\geq \lcop(u,1)+\lcop(1,v)=u+v.
\]
Then $\lcop(u,v)\geq u+v-1$, and by definition $\lcop$ is also
non-negative. Finally, the inequality $\lcop\leq\ucop$ follows from
Proposition \ref{pro:copula_properties}~(a).
\end{proof}


\begin{remark}\label{rem:copula}
Given a copula $C:[0,1]\times[0,1]\rightarrow [0,1]$, it is
immediate to see that its extension
$C':\realsexp\times\realsexp\rightarrow [0,1]$ given by
\begin{equation*}
 C'(x,y):=\begin{cases}
  C(x,y) &\text{ if } (x,y)\in[0,1]\times[0,1] \\
  0 &\text{ if } x<0 \text{ or } y<0 \\
  \min\{x,y\} &\text{ if } \min\{x,y\}\in [0,1] \text{ and } \max\{x,y\}\in[1,+\infty]\\
  1 &\text{ otherwise}
  \end{cases}
\end{equation*}
is a distribution function. Taking this into account, given any
non-empty set of copulas $\C$, its infimum $\lcop$ and supremum
$\ucop$ form a coherent bivariate $p$-box. Moreover, an imprecise
copula $(\lcop,\ucop)$ can be extended to $\realsexp\times\realsexp$
in the manner described above, and then it constitutes a bivariate
$p$-box that satisfies
conditions~\eqref{eq:cond1}$\div$\eqref{eq:cond4} (although it is
still an open problem whether it is coherent).
$\blacklozenge$
\end{remark}

Let us see to what extent an analogue of Sklar's theorem also holds
in an imprecise framework. For this aim, we start by considering
marginal imprecise distributions, described by (univariate)
$p$-boxes, and we use imprecise copulas to obtain a bivariate
$p$-box.

\begin{proposition}\label{prop:Imp_Sklar2}
Let $(\ldf_{\rm X},\udf_{\rm X})$ and $(\ldf_{\rm Y},\udf_{\rm Y})$
be two marginal $p$-boxes on $\realsexp$, and let $\mathcal{C}$ be a
set of copulas. Consider the imprecise copula $(\lcop,\ucop)$
defined from $\mathcal{C}$ by
$\lcop(u,v)=\inf_{C\in\mathcal{C}}C(u,v)$ and
$\ucop(u,v)=\sup_{C\in\mathcal{C}}C(u,v)$ for every $u,v\in[0,1]$.
Define the couple $(\ldf,\udf)$ by:
\begin{equation}\label{eq:impr-pbox-envelope}
\ldf(x,y)=\lcop(\ldf_{\rm X}(x),\ldf_{\rm Y}(y))
\mbox{ and }\udf(x,y)=\ucop(\udf_{\rm
X}(x),\udf_{\rm Y}(y))
\end{equation}
for any $(x,y)\in\realsexp\times\realsexp$.
Then, $(\ldf,\udf)$ is a bivariate $p$-box and it holds that:
\begin{itemize}
\item[(a)] $\lpr_{(\ldf,\udf)}$ is coherent.

\item[(b)] The credal set $\mathcal{M}(\lpr_{(\ldf,\udf)})$ associated with the lower
probability $\lpr_{(\ldf,\udf)}$ by means of
Eq.~\eqref{eq:credal-set} is given by \[
\mathcal{M}(\lpr_{(\ldf,\udf)})=\{ P \mbox{ linear prevision}\mid
\lcop(\ldf_{\rm X},\ldf_{\rm Y})\leq F_P\leq \ucop(\udf_{\rm X},\udf_{\rm Y})\}.
\]
\end{itemize}
\end{proposition}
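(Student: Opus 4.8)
The plan is to establish the three claims in order, with the bulk of the work going into coherence (part (a)); part (b) will then fall out almost immediately from Lemma~\ref{le:dominating sets-correspondence}. First I would check that $(\ldf,\udf)$ is a genuine bivariate $p$-box. Component-wise monotonicity is inherited by composition: $\lcop$ and $\ucop$ are component-wise increasing by Proposition~\ref{pro:copula_properties}(b), and $\ldf_{\rm X},\ldf_{\rm Y},\udf_{\rm X},\udf_{\rm Y}$ are univariate distribution functions, hence increasing. The boundary conditions come from those of the imprecise copula, e.g.\ $\ldf(-\infty,y)=\lcop(0,\ldf_{\rm Y}(y))=0$ and $\udf(+\infty,+\infty)=\ucop(1,1)=1$. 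For the ordering, combining $\lcop\leq\ucop$ (Proposition~\ref{pro:copula_properties}(a)) with the monotonicity of $\ucop$ and $\ldf_{\rm X}\leq\udf_{\rm X}$, $\ldf_{\rm Y}\leq\udf_{\rm Y}$ gives
\[
\ldf(x,y)=\lcop(\ldf_{\rm X}(x),\ldf_{\rm Y}(y))\leq\ucop(\ldf_{\rm X}(x),\ldf_{\rm Y}(y))\leq\ucop(\udf_{\rm X}(x),\udf_{\rm Y}(y))=\udf(x,y).
\]

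The heart of the argument is the following construction, which I would use to invoke the envelope characterisation of Proposition~\ref{pr:coherence-correspondence}. For any copula $C\in\mathcal{C}$, the converse part of Sklar's theorem (Theorem~\ref{th:sklar-precise}) guarantees that $C(\ldf_{\rm X},\ldf_{\rm Y})$ and $C(\udf_{\rm X},\udf_{\rm Y})$ are bivariate distribution functions. I would then verify that both belong to the set $\mathcal{F}$ of Eq.~\eqref{eq:interval_distributions}: since $\lcop=\inf_{C\in\mathcal{C}}C$ and $\ucop=\sup_{C\in\mathcal{C}}C$ pointwise, one has $\ldf\leq C(\ldf_{\rm X},\ldf_{\rm Y})\leq\ucop(\ldf_{\rm X},\ldf_{\rm Y})\leq\udf$, where the last step uses the monotonicity of $\ucop$, and symmetrically $\ldf\leq\lcop(\udf_{\rm X},\udf_{\rm Y})\leq C(\udf_{\rm X},\udf_{\rm Y})\leq\udf$. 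To show $\ldf$ is the lower envelope of $\mathcal{F}$, I would fix $(x_0,y_0)$ and $\varepsilon>0$; by the definition of the pointwise infimum there is $C_\varepsilon\in\mathcal{C}$ with $C_\varepsilon(\ldf_{\rm X}(x_0),\ldf_{\rm Y}(y_0))<\ldf(x_0,y_0)+\varepsilon$, and the member $C_\varepsilon(\ldf_{\rm X},\ldf_{\rm Y})\in\mathcal{F}$ witnesses $\inf_{F\in\mathcal{F}}F(x_0,y_0)\leq\ldf(x_0,y_0)$, while the reverse inequality is immediate because every $F\in\mathcal{F}$ dominates $\ldf$. The same argument applied to $C(\udf_{\rm X},\udf_{\rm Y})$ shows that $\udf$ is the upper envelope of $\mathcal{F}$, and Proposition~\ref{pr:coherence-correspondence} then yields coherence of $\lpr_{(\ldf,\udf)}$.

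For part (b), I expect the conclusion to be essentially a restatement of Lemma~\ref{le:dominating sets-correspondence}(a). By Eq.~\eqref{eq:credal-set}, $\mathcal{M}(\lpr_{(\ldf,\udf)})$ consists of the linear previsions $P$ with $P(A)\geq\lpr_{(\ldf,\udf)}(A)$ for every $A\in\mathcal{E}$, and Lemma~\ref{le:dominating sets-correspondence}(a) translates this exactly into $\ldf\leq F_P\leq\udf$, that is, $\lcop(\ldf_{\rm X},\ldf_{\rm Y})\leq F_P\leq\ucop(\udf_{\rm X},\udf_{\rm Y})$. The main obstacle I anticipate is not in part (b) but in the envelope step of part (a): one must confirm that the products $C(\ldf_{\rm X},\ldf_{\rm Y})$ and $C(\udf_{\rm X},\udf_{\rm Y})$ genuinely stay inside $[\ldf,\udf]$ and approach the bounds. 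This is where the hypothesis that $(\lcop,\ucop)$ arises from a \emph{set of copulas} is indispensable, since it lets us produce explicit distribution functions realising the infimum and supremum; the analogous statement for an abstract imprecise copula is precisely the open coherence problem flagged after Remark~\ref{rem:copula}.
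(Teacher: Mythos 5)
Your proposal is correct and follows essentially the same route as the paper's proof: both verify the $p$-box property from monotonicity and $\lcop\leq\ucop$, use the converse part of Sklar's theorem to place $C(\ldf_{\rm X},\ldf_{\rm Y})$ and $C(\udf_{\rm X},\udf_{\rm Y})$ inside $\mathcal{F}$ so that $\ldf$ and $\udf$ become its lower and upper envelopes, invoke Proposition~\ref{pr:coherence-correspondence} for coherence, and read off part~(b) from Lemma~\ref{le:dominating sets-correspondence}. Your $\varepsilon$-argument for the envelope step is just a spelled-out version of the paper's direct chaining of infima, so the two proofs are the same in substance.
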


\begin{proof}
Note that $\ldf\leq\udf$, since
$\ldf=\lcop(\ldf_X,\ldf_Y)\leq\lcop(\udf_X,\udf_Y)\leq\ucop(\udf_X,\udf_Y)=\udf$.
It is easy to check that both $\ldf,\udf$ are standardized and as a
consequence $(\ldf,\udf)$ is a bivariate $p$-box.
\begin{itemize}
\item[(a)] Let $\dfs$ be the set of distribution functions associated with the
bivariate $p$-box $(\ldf,\udf)$ by means of
Eq.~\eqref{eq:interval_distributions}. Since $\ldf_{\rm X},\udf_{\rm
X},\ldf_{\rm Y},\udf_{\rm Y}$ are marginal distribution functions,
Sklar's theorem implies that $C(\ldf_{\rm X}(x),\ldf_{\rm Y}(y))$
and $C(\udf_{\rm X}(x),\udf_{\rm Y}(y))$ are bivariate distribution
functions for any $C\in\mathcal{C}$. Moreover, they necessarily
belong to $\mathcal{F}$ by Eq.~\eqref{eq:impr-pbox-envelope}. From
this we deduce that
\begin{equation*}
\ldf(x,y)\leq\inf_{F\in\mathcal{F}} F(x,y)\leq\lcop(\ldf_{\rm
X}(x),\ldf_{\rm Y}(y))=\ldf(x,y),
\end{equation*}
and therefore $\ldf(x,y)=\inf_{F\in\mathcal{F}} F(x,y)$. Similarly,
we can prove that $\udf(x,y)=\sup_{F\in\mathcal{F}} F(x,y)$.
Applying now Proposition~\ref{pr:coherence-correspondence}, we
deduce that $\lpr_{(\ldf,\udf)}$ is coherent.

\item[(b)] This follows from the first statement and
Lemma~\ref{le:dominating sets-correspondence}. 
$\qedhere$
\end{itemize}
\end{proof}

In particular, when the available information about the marginal distributions
is precise, and it is given by the distribution functions $F_{\rm
X}$ and $F_{\rm Y}$, the bivariate $p$-box in the proposition above
is given by
\begin{equation*}
\ldf(x,y)=\inf_{C\in\mathcal{C}}C(F_{\rm X}(x),F_{\rm Y}(y)) \text{
and } \udf(x,y)=\sup_{C\in\mathcal{C}}C(F_{\rm X}(x),F_{\rm Y}(y))
\end{equation*}
for every $(x,y)\in\realsexp\times\realsexp$. As a consequence, the
result above generalizes \cite[Theorem~2.4]{Nelsen04}, where the
authors only focused on the functions $\ldf$ and $\udf$, showing
that $\ldf(x,y)=\lcop(F_{\rm X}(x),F_{\rm Y}(y))$ and
$\udf(x,y)=\ucop(F_{\rm X}(x),F_{\rm Y}(y))$. 
Instead, in Proposition~\ref{prop:Imp_Sklar2} we are also allowing
for the existence of imprecision in the marginal distributions, that
we model by means of $p$-boxes.
Note that we have also established the
coherence of the joint lower probability $\lpr_{(\ldf,\udf)}$
and therefore of the $p$-box $(\ldf,\udf)$.

%
%

Proposition~\ref{prop:Imp_Sklar2} generalizes to the imprecise case
one of the implications in Sklar's theorem: if we combine two
marginal $p$-boxes by means of a set of copulas, we obtain a
coherent bivariate $p$-box, which is thus equivalent to a set of
bivariate distribution functions. We focus now on the other
implication: whether any bivariate $p$-box can be obtained as a
function of its marginals.\footnote{A similar study was made in
\cite[Theorem~2.4]{Durante} in terms of capacities and semi-copulas,
showing that the survival functions induced by a capacity can always
be expressed as a semi-copula of their marginals. Here we
investigate when the combination can be made in terms of an
imprecise copula. Note moreover that our focus is on \emph{coherent}
bivariate $p$-boxes, which produces capacities that are most
restrictive than those considered in \cite{Durante} (they are closer
to the precise case, so to speak). This is why we also consider the
particular case where the semi-copulas constitute an imprecise
copula.}

A partial result in this sense has been established in
\cite[Theorem~9]{scarsini1996}. In our language, it ensures that if
the restriction on $\mathcal{D}$ of $\lpr_{(\ldf,\udf)}$ is (a
restriction of) a $2$-monotone lower probability, then there exists
a function $\lcop:[0,1]\times [0,1]\rightarrow [0,1]$, which is
component-wise increasing and satisfies \eqref{eq:cop_1} and
\eqref{eq:cop_2}, such that $\ldf(x,y)=\lcop(\ldf_{\rm
X}(x),\ldf_{\rm Y}(y))$ for every $(x,y)$ in
$\realsexp\times\realsexp$. This has been used in the context of
random sets in \cite{alvarez2009,schmelzer2014}.

Somewhat surprisingly, we show next that this result cannot be
generalized to arbitrary $p$-boxes.

\begin{example}\label{ex:converse-sklar-not}
Let $\pr_1,\pr_2$ be the discrete probability measures associated
with the following masses on $\mathcal{X}\times\mathcal{Y}=\{1,2,3\}\times\{1,2\}$:
\begin{center}
\begin{tabular}{|c|c|c|c|c|c|c|}
  \hline
  & $(1,1)$ & $(2,1)$ & $(1,2)$ & $(2,2)$ & $(3,1)$ & $(3,2)$ \\ \hline
  $\pr_1$ & 0.2 & 0 & 0.3 & 0 & 0 & 0.5 \\
  $\pr_2$ & 0.1 & 0.2 & 0.5 & 0.1 & 0 & 0.1 \\
  \hline
\end{tabular}
\end{center}
Let $\lpr$ be the lower envelope of $\{\pr_1,\pr_2\}$. Then, $\lpr$
is a coherent lower probability, and its associated $p$-box
$(\ldf,\udf)$ satisfies
\begin{equation*}
\ldf_{\rm X}(1)=\ldf_{\rm X}(2)=0.5, \ldf_{\rm Y}(1)=0.2, \ldf(1,1)=0.1 <
\ldf(2,1)=0.2.
\end{equation*}
If there was a function $\lcop$ such that
$\ldf(x,y)=\lcop(\ldf_{\rm X}(x),\ldf_{\rm Y}(y))$ for every
$(x,y)\in\realsexp\times\realsexp$, then we should have
\begin{equation*}
 \ldf(1,1)=\lcop(\ldf_{\rm X}(1),\ldf_{\rm Y}(1))=\lcop(\ldf_{\rm X}(2),\ldf_{\rm Y}(1))=\ldf(2,1).
\end{equation*}
This is a contradiction. As a consequence, the lower distribution in
the bivariate $p$-box cannot be expressed as a function of its
marginals. $\blacklozenge$
\end{example}

This shows that the direct implication of Sklar's theorem does not
hold in the bivariate case: given a coherent bivariate $p$-box
$(\ldf,\udf)$, there is not in general an imprecise copula
$(\lcop,\ucop)$ determining it by means of
Eq.~\eqref{eq:impr-pbox-envelope}. The key point here is that the
lower and upper distribution functions of a coherent bivariate
$p$-box may not be distribution functions themselves, as showed in
\cite{pelessoni2014}; they need only be standardized functions.
Indeed, if $\ldf,\udf$ were distribution functions we could always
apply Sklar's theorem to them, and we could express each of them as
a copula of its marginals. What Example~\ref{ex:converse-sklar-not}
shows is that this is no longer possible when $\ldf,\udf$ are just
standardized functions, nor in general when $(\ldf,\udf)$ is
coherent. We can thus summarize the results of this section in the
following theorem:

\begin{theorem}[{\bf Imprecise Sklar's Theorem}]
The following statements hold:
\begin{itemize}
\item[(a)] Given two marginal p-boxes $(\ldf_{\rm X},\udf_{\rm X})$ and
$(\ldf_{\rm Y},\udf_{\rm Y})$ on $\realsexp$ and a set of copulas
$\mathcal{C}$, the functions $\ldf,\udf$ given by
Eq.~\eqref{eq:impr-pbox-envelope} determine a bivariate $p$-box on
$\realsexp\times\realsexp$, whose associated lower probability is
coherent.
\item[(b)] Not every bivariate $p$-box can be expressed by means of its marginals and a set of copulas by
Eq.~\eqref{eq:impr-pbox-envelope}, not even when its associated lower
probability is coherent.
\end{itemize}
\end{theorem}

\subsection{Natural extension of marginal $p$-boxes}\label{sec:natex}

Next we consider two particular combinations of the marginal
$p$-boxes into the bivariate one. First of all, we consider the case
where there is no information about the copula that links the
marginal distribution functions.

\begin{lemma}
Consider the univariate $p$-boxes $(\ldf_{\rm X},\udf_{\rm X})$ and
$(\ldf_{\rm Y},\udf_{\rm Y})$ on $\realsexp$, and let $\lpr$ be the lower
probability defined on
\begin{equation*}
\mathcal{A}^{*}:=\{A_{(x,+\infty)}, A_{(x,+\infty)}^c,
A_{(+\infty,y)}, A_{(+\infty,y)}^c:
x,y\in\realsexp\}\subseteq{\mathcal E}
\end{equation*}
by
\begin{eqnarray}
\lpr(A_{(x,+\infty)})=\ldf_{\rm X}(x) \qquad \lpr(A_{(x,+\infty)}^c)=1-\udf_{\rm X}(x) \ \forall x\in\realsexp,\label{eq:def-marginal-lpr-x}\\
\lpr(A_{(+\infty,y)})=\ldf_{\rm Y}(y) \qquad
\lpr(A_{(+\infty,y)}^c)=1-\udf_{\rm Y}(y) \ \forall
y\in\realsexp.\label{eq:def-marginal-lpr-y}
\end{eqnarray}
Then:
\begin{enumerate}
\item $\lpr$ is a coherent lower probability.
\item $\mathcal{M}(\lpr)=\mathcal{M}(C_{\rm L},C_{\rm M})$, where $C_{\rm L},C_{\rm M}$ are the copulas given by
Eq.~\eqref{eq:Frechet-bounds} and
\[
\mathcal{M}(C_{\rm L},C_{\rm M})=\{P \mbox{ linear prevision}:
C_{\rm L}(\ldf_{\rm X},\ldf_{\rm Y})\leq F_{\rm P}\leq C_{\rm
M}(\udf_{\rm X},\udf_{\rm Y}) \}.
\]
\end{enumerate}
\end{lemma}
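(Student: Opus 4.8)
The plan is to identify the credal set $\mathcal{M}(\lpr)$ explicitly first, deduce coherence from it, and then read off the equality with $\mathcal{M}(C_{\rm L},C_{\rm M})$ using the Fr\'echet--Hoeffding bounds. I start by unfolding the domain $\mathcal{A}^{*}$. For a linear prevision $P$ with distribution function $F_P(x,y)=P(A_{(x,y)})$, the constraints $P(A_{(x,+\infty)})\geq\lpr(A_{(x,+\infty)})$ and $P(A_{(x,+\infty)}^c)\geq\lpr(A_{(x,+\infty)}^c)$ become, via \eqref{eq:def-marginal-lpr-x}, exactly $\ldf_{\rm X}(x)\leq F_P(x,+\infty)\leq\udf_{\rm X}(x)$, and the analogous treatment of the $Y$-events via \eqref{eq:def-marginal-lpr-y} gives $\ldf_{\rm Y}(y)\leq F_P(+\infty,y)\leq\udf_{\rm Y}(y)$. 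Hence, directly from \eqref{eq:credal-set},
\[
\mathcal{M}(\lpr)=\{P \text{ linear} : \ldf_{\rm X}\leq F_P(\cdot,+\infty)\leq\udf_{\rm X} \text{ and } \ldf_{\rm Y}\leq F_P(+\infty,\cdot)\leq\udf_{\rm Y}\};
\]
that is, $\mathcal{M}(\lpr)$ consists precisely of the joint linear previsions whose two marginals are compatible with the respective marginal $p$-boxes, with no constraint on the joint behaviour.

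For statement~(1), I would show that $\lpr$ coincides on $\mathcal{A}^{*}$ with the lower envelope $\underline{E}$ of $\mathcal{M}(\lpr)$; since the lower envelope of a non-empty set of linear previsions is coherent, this establishes coherence of $\lpr$. Non-emptiness is immediate: $\ldf_{\rm X}$ and $\ldf_{\rm Y}$ are themselves distribution functions, so coupling them with the product copula yields, by Sklar's theorem (Theorem~\ref{th:sklar-precise}), a joint distribution function whose associated prevision lies in $\mathcal{M}(\lpr)$. To verify $\underline{E}|_{\mathcal{A}^{*}}=\lpr$, observe that $\underline{E}(A_{(x,+\infty)})=\inf_{P\in\mathcal{M}(\lpr)}F_P(x,+\infty)$; every such $P$ has $F_P(x,+\infty)\geq\ldf_{\rm X}(x)$, and the bound is attained by taking $P$ whose whole $X$-marginal equals the (admissible) distribution function $\ldf_{\rm X}$, coupled with any admissible $Y$-marginal. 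Thus $\underline{E}(A_{(x,+\infty)})=\ldf_{\rm X}(x)$, and the other three families of events are handled identically, using that $\udf_{\rm X}$ (resp.\ $\ldf_{\rm Y},\udf_{\rm Y}$) is itself an admissible marginal to attain the supremum.

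For statement~(2), with $\mathcal{M}(\lpr)$ identified as above, it remains to prove $\mathcal{M}(\lpr)=\mathcal{M}(C_{\rm L},C_{\rm M})$. For $\subseteq$, take $P\in\mathcal{M}(\lpr)$ and apply the Fr\'echet--Hoeffding bounds \eqref{eq:Frechet-bounds} to $F_P$, obtaining $C_{\rm L}(F_P(x,+\infty),F_P(+\infty,y))\leq F_P(x,y)\leq C_{\rm M}(F_P(x,+\infty),F_P(+\infty,y))$. Since $C_{\rm L}$ and $C_{\rm M}$ are component-wise monotone and the marginals of $P$ lie between the corresponding $p$-box bounds, the upper side is $\leq C_{\rm M}(\udf_{\rm X}(x),\udf_{\rm Y}(y))$ and the lower side is $\geq C_{\rm L}(\ldf_{\rm X}(x),\ldf_{\rm Y}(y))$, so $P\in\mathcal{M}(C_{\rm L},C_{\rm M})$. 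For the reverse inclusion, take $P$ with $C_{\rm L}(\ldf_{\rm X},\ldf_{\rm Y})\leq F_P\leq C_{\rm M}(\udf_{\rm X},\udf_{\rm Y})$ and evaluate at $y=+\infty$: using $\ldf_{\rm Y}(+\infty)=\udf_{\rm Y}(+\infty)=1$ one gets $C_{\rm L}(\ldf_{\rm X}(x),1)=\ldf_{\rm X}(x)$ and $C_{\rm M}(\udf_{\rm X}(x),1)=\udf_{\rm X}(x)$, whence $\ldf_{\rm X}(x)\leq F_P(x,+\infty)\leq\udf_{\rm X}(x)$; the symmetric evaluation at $x=+\infty$ controls the $Y$-marginal, so $P\in\mathcal{M}(\lpr)$.

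I expect the only delicate point to be the bookkeeping in statement~(1): one must ensure the exact equality $\underline{E}|_{\mathcal{A}^{*}}=\lpr$ rather than just the inequality $\underline{E}\geq\lpr$. This rests on two facts, namely that the univariate envelopes $\ldf_{\rm X},\udf_{\rm X},\ldf_{\rm Y},\udf_{\rm Y}$ are themselves admissible marginals (so the one-sided bounds are attained) and that the two marginals may be coupled freely. Given these, everything in statement~(2) is a direct consequence of the Fr\'echet--Hoeffding inequalities and the boundary behaviour of $C_{\rm L}$ and $C_{\rm M}$.
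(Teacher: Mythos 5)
Your proof is correct, but it is organised differently from the paper's. For part~(1), the paper does not work with the credal set directly: it invokes Proposition~\ref{prop:Imp_Sklar2} with the set $\mathcal{C}^{*}$ of \emph{all} copulas to obtain the coherent bivariate $p$-box $\bigl(C_{\rm L}(\ldf_{\rm X},\ldf_{\rm Y}),\,C_{\rm M}(\udf_{\rm X},\udf_{\rm Y})\bigr)$, and then checks by the boundary computations $C_{\rm L}(u,1)=u$, $C_{\rm M}(u,1)=u$ that $\lpr$ coincides on $\mathcal{A}^{*}$ with the induced coherent lower probability $\lpr_{(\ldf,\udf)}$, so coherence is inherited by restriction; you instead prove coherence from first principles, by identifying $\mathcal{M}(\lpr)$ as the set of joints with compatible marginals and exhibiting explicit couplings (e.g.\ the product copula applied to $\ldf_{\rm X},\ldf_{\rm Y}$ or $\udf_{\rm X},\udf_{\rm Y}$) that attain each bound, so that $\lpr$ is the lower envelope of a non-empty set of linear previsions. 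For part~(2), the substance is the same in both arguments (Sklar plus the Fr\'echet--Hoeffding bounds plus component-wise monotonicity for one inclusion), but the paper gets the inclusion $\mathcal{M}(C_{\rm L},C_{\rm M})\subseteq\mathcal{M}(\lpr)$ abstractly from Lemma~\ref{le:dominating sets-correspondence} together with the coincidence of $\lpr$ and $\lpr_{(\ldf,\udf)}$ on $\mathcal{A}^{*}$, whereas you obtain it by the direct evaluation at $x=+\infty$ and $y=+\infty$. What the paper's route buys is economy and reuse: the coincidence $\lpr=\lpr_{(\ldf,\udf)}$ on $\mathcal{A}^{*}$ established in part~(1) does double duty in part~(2), and the lemma is exhibited as a corollary of the general machinery (Proposition~\ref{prop:Imp_Sklar2}, Lemma~\ref{le:dominating sets-correspondence}) that the paper also exploits in Proposition~\ref{prop:naturalextension}. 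What your route buys is self-containedness and an explicit description of $\mathcal{M}(\lpr)$ (all joints whose marginals respect the two $p$-boxes, with no constraint on the coupling) which the paper never states but which makes the equality with $\mathcal{M}(C_{\rm L},C_{\rm M})$ transparent. Two small points of care: your citation of Eq.~\eqref{eq:Frechet-bounds} to bound $F_P$ is, strictly speaking, the copula version of the Fr\'echet--Hoeffding inequality, so you should first write $F_P=C(F_P(\cdot,+\infty),F_P(+\infty,\cdot))$ via Sklar's theorem (this is exactly the step the paper makes explicit); and your attainment argument tacitly uses the standard fact that every bivariate distribution function arises as $F_P$ for some linear prevision $P$ on $\gambles(\realsexp\times\realsexp)$ --- a correspondence the paper also relies on through Lemma~\ref{le:dominating sets-correspondence}(b), so this is a shared convention rather than a gap.
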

\begin{proof}
\begin{enumerate}
\item We use $\mathcal{C}^{*}$ to denote
the set of all copulas. By Propositions~\ref{pr:largest-imprecise-copula} and \ref{prop:Imp_Sklar2} and
Eq.~\eqref{eq:Frechet-bounds},
\begin{align*}
 \ldf(x,y)=\lcop(\ldf_{\rm X}(x),\ldf_{\rm Y}(y))&=\inf_{C\in{\mathcal{C}^{*}}}C(\ldf_{\rm X}(x),\ldf_{\rm Y}(y))\\&=C_{\rm L}(\ldf_{\rm X}(x),\ldf_{\rm Y}(y)),
\end{align*}
and similarly
$\udf(x,y)=\ucop(\udf_{\rm X}(x),\udf_{\rm Y}(y))=C_{\rm M}(\udf_{\rm X}(x),\udf_{\rm Y}(y))$.
Let $\lpr_{(\ldf,\udf)}$ the coherent lower probability induced by
$(\ldf,\udf)$ by Eq.~\eqref{eq:lpr-from-pbox}. Then
\begin{align*}
 \lpr_{(\ldf,\udf)}(A_{(x,+\infty)})&=\ldf(x,+\infty)=C_{\rm L}(\ldf_{\rm X}(x),\ldf_{\rm Y}(+\infty))\\&=\max\{\ldf_{\rm X}(x)+\ldf_{\rm Y}(+\infty)-1,0\}\\&=
\max\{\ldf_{\rm X}(x),0\}=\ldf_{\rm X}(x)=\lpr(A_{(x,+\infty)})
\end{align*}
and also
\begin{align*}
\lpr_{(\ldf,\udf)}(A_{(x,+\infty)}^c)&=1-\udf(x,+\infty)=1-C_{\rm M}(\udf_{\rm X}(x),\udf_{\rm Y}(+\infty))\\
&=1-\min\{\udf_{\rm X}(x),\udf_{\rm Y}(+\infty)\}=
1-\min\{\udf_{\rm X}(x),1\}\\&=1-\udf_{\rm X}(x)=\lpr(A_{(x,+\infty)}^c).
\end{align*}
With an analogous reasoning, we obtain
$\lpr_{(\ldf,\udf)}(A_{(+\infty,y)})=\lpr(A_{(+\infty,y)})$ and
$\lpr_{(\ldf,\udf)}(A_{(+\infty,y)}^c)=\lpr(A_{(+\infty,y)}^c)$.
Therefore, $\lpr$ coincides with $\lpr_{(\ldf,\udf)}$ in
$\mathcal{A}^{*}$, and consequently $\lpr$ is coherent.
\item
Let $P\in\mathcal{M}(C_{\rm L},C_{\rm M})$. Then,
$P\geq\lpr_{(\ldf,\udf)}$ on $\mathcal{E}$ by
Lemma~\ref{le:dominating sets-correspondence}. Since $\lpr$
coincides with $\lpr_{(\ldf,\udf)}$ on $\mathcal{A}^{*}$,
$P\in\mathcal{M}(\lpr)$.

Conversely, let $P\in\mathcal{M}(\lpr)$, and let $F_{\rm P}$ be its
associated distribution function. Then, Sklar's Theorem assures that
there is $C\in\mathcal{C}^{*}$ such that $F_{\rm P}(x,y)=C(F_{\rm
P}(x,+\infty),F_{\rm P}(+\infty,y))$ for every
$(x,y)\in\realsexp\times\realsexp$. Hence,
    \[
    \begin{array}{l c l}
    C_{\rm L}(\ldf_X(x),\ldf_Y(y))&\leq& C_{\rm L}(F_{\rm P}(x,+\infty),F_{\rm P}(+\infty,y))\\
                            &\leq& C(F_{\rm P}(x,+\infty),F_{\rm P}(+\infty,y))\\
                            &\leq& C(\udf_X(x),\udf_Y(y))\leq C_{\rm M}(\udf_X(x),\udf_Y(y)),
    \end{array}
    \]
taking into account that any copula is component-wise increasing and
lies between $C_{\rm L}$ and $C_{\rm M}$. Therefore,
$P\in\mathcal{M}(C_{\rm L},C_{\rm M})$ and as a consequence
$\mathcal{M}(\lpr)=\mathcal{M}(C_{\rm L},C_{\rm M})$. $\qedhere$
\end{enumerate}
\end{proof}

From this result we can immediately derive the expression of the
{\em natural extension} \cite{walley1991} of two marginal $p$-boxes,
that is the least-committal (i.e., the most imprecise) coherent
lower probability that extends $\lpr$ to a larger domain:

\begin{proposition}\label{prop:naturalextension}
Let $(\ldf_{\rm X},\udf_{\rm X})$ and $(\ldf_{\rm Y},\udf_{\rm Y})$ be two univariate
$p$-boxes. Let $\lpr$ be the lower probability defined on the set
$\mathcal{A}^{*}$ 
by means of Eqs.~\eqref{eq:def-marginal-lpr-x}
and~\eqref{eq:def-marginal-lpr-y}. The natural extension
$\underline{E}$ of $\lpr$ to ${\mathcal E}$ is given by
\[
\underline{E}(A_{(x,y)})=C_{\rm L}(\ldf_{\rm X}(x),\ldf_{\rm Y}(y)) \mbox{ and }
\underline{E}(A_{(x,y)}^c)=1-C_{\rm M}(\udf_{\rm X}(x),\udf_{\rm Y}(y)),
\]
for every $x,y\in\realsexp$. As a consequence, the bivariate $p$-box
$(\ldf,\udf)$ associated with $\underline{E}$ is given by:
\[
\ldf(x,y)=C_{\rm L}(\ldf_{\rm X}(x),\ldf_{\rm Y}(y)) \mbox{ and }\udf(x,y)=C_{\rm M}(\udf_{\rm X}(x),\udf_{\rm Y}(y)).
\]
\end{proposition}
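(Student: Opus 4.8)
The plan is to exploit the fact, standard in Walley's theory \cite{walley1991}, that the natural extension of a coherent lower prevision to a larger domain is obtained by taking the lower envelope of its credal set. The preceding Lemma has already done most of the work: it establishes that $\lpr$ is coherent (so that its natural extension exists and equals this lower envelope) and, crucially, that $\mathcal{M}(\lpr)=\mathcal{M}(C_{\rm L},C_{\rm M})$. Hence for every $A\in\mathcal{E}$ we have $\underline{E}(A)=\inf_{P\in\mathcal{M}(\lpr)}P(A)=\inf_{P\in\mathcal{M}(C_{\rm L},C_{\rm M})}P(A)$, and the task reduces to computing this infimum on the events $A_{(x,y)}$ and $A_{(x,y)}^c$.

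To carry this out I would reintroduce the bivariate $p$-box $(\ldf,\udf)$ built from the set $\mathcal{C}^{*}$ of all copulas through Eq.~\eqref{eq:impr-pbox-envelope}; by Proposition~\ref{pr:largest-imprecise-copula} and Eq.~\eqref{eq:Frechet-bounds} this gives $\ldf=C_{\rm L}(\ldf_{\rm X},\ldf_{\rm Y})$ and $\udf=C_{\rm M}(\udf_{\rm X},\udf_{\rm Y})$, exactly the functions appearing in the statement. Proposition~\ref{prop:Imp_Sklar2} then tells me two things: that $\lpr_{(\ldf,\udf)}$ is coherent, and that its credal set is precisely $\mathcal{M}(C_{\rm L},C_{\rm M})$. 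Since a coherent lower probability is the lower envelope of its own credal set (Eq.~\eqref{eq:credal-set}), $\lpr_{(\ldf,\udf)}$ is the lower envelope of $\mathcal{M}(C_{\rm L},C_{\rm M})=\mathcal{M}(\lpr)$ on $\mathcal{E}$, which by the displayed infimum formula coincides with $\underline{E}$. Reading off the values of $\lpr_{(\ldf,\udf)}$ through Eq.~\eqref{eq:lpr-from-pbox} yields
\[
\underline{E}(A_{(x,y)})=\ldf(x,y)=C_{\rm L}(\ldf_{\rm X}(x),\ldf_{\rm Y}(y)),\quad \underline{E}(A_{(x,y)}^c)=1-\udf(x,y)=1-C_{\rm M}(\udf_{\rm X}(x),\udf_{\rm Y}(y)),
\]
which are the two claimed identities. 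The final assertion about the bivariate $p$-box associated with $\underline{E}$ is then immediate from Eq.~\eqref{eq:pbox-from-lpr}.

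I do not expect a genuine obstacle here, since the coherence of $\lpr$ and the credal-set identity $\mathcal{M}(\lpr)=\mathcal{M}(C_{\rm L},C_{\rm M})$ are inherited from the preceding Lemma. The one point deserving care is the identification of the natural extension to $\mathcal{E}$ with the lower envelope of $\mathcal{M}(\lpr)$ restricted to $\mathcal{E}$, for which one invokes that, for a lower prevision avoiding sure loss, the natural extension to any superset of its domain is the lower envelope of the credal set, and that restricting this envelope to $\mathcal{E}$ amounts to taking the infimum event by event. An alternative, more self-contained route that avoids quoting the envelope characterisation would be to verify the two inequalities directly: the bound $\inf_P F_P(x,y)\geq C_{\rm L}(\ldf_{\rm X}(x),\ldf_{\rm Y}(y))$ holds because every $P\in\mathcal{M}(C_{\rm L},C_{\rm M})$ satisfies $F_P\geq C_{\rm L}(\ldf_{\rm X},\ldf_{\rm Y})$, while the reverse inequality follows because Proposition~\ref{prop:Imp_Sklar2}(a) guarantees that $C_{\rm L}(\ldf_{\rm X},\ldf_{\rm Y})$ is attained as the pointwise infimum of the distribution functions in the $p$-box, each of which induces a prevision lying in $\mathcal{M}(C_{\rm L},C_{\rm M})$; the companion bound for the complements $A_{(x,y)}^c$ is handled symmetrically using $C_{\rm M}$ and $\udf_{\rm X},\udf_{\rm Y}$.
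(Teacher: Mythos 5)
Your proposal is correct and follows essentially the same route as the paper: both rest on the preceding lemma's two facts (coherence of $\lpr$ and $\mathcal{M}(\lpr)=\mathcal{M}(C_{\rm L},C_{\rm M})$), identify the natural extension on $\mathcal{E}$ with the event-wise lower envelope of this credal set, and evaluate that envelope via Proposition~\ref{prop:Imp_Sklar2} and Lemma~\ref{le:dominating sets-correspondence}. Your write-up merely makes explicit the envelope-attainment step that the paper leaves implicit in the equality $\inf_{P\in\mathcal{M}(C_{\rm L},C_{\rm M})}F_P(x,y)=C_{\rm L}(\ldf_{\rm X}(x),\ldf_{\rm Y}(y))$, which is a welcome clarification but not a different argument.
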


\begin{proof}
The lower probability $\lpr$ is coherent from the previous lemma,
and in addition its associated credal set is
$\mathcal{M}(\lpr)=\mathcal{M}(C_{\rm L},C_{\rm M})$. The natural
extension of $\lpr$ to the set $\mathcal{E}$ is given by:
\[
\begin{array}{r c l}
\underline{E}(A_{(x,y)})&=&\inf_{P\in\mathcal{M}(\lpr)}F_{\rm P}(x,y)\\
                                               &=&\inf_{P\in\mathcal{M}(C_{\rm L},C_{\rm M})}F_{\rm P}(x,y)=C_{\rm L}(\ldf_{\rm X}(x),\ldf_{\rm Y}(y)).\\
\underline{E}(A_{(x,y)}^c)&=&\inf_{P\in\mathcal{M}(\lpr)}(1-P(A_{(x,y)}))=1-\sup_{P\in\mathcal{M}(\lpr)}F_{\rm P}(x,y)\\
                                                &=&1-\sup_{P\in\mathcal{M}(C_{\rm L},C_{\rm M})}F_{\rm P}(x,y)=1-C_{\rm M}(\udf_{\rm X}(x),\udf_{\rm Y}(y)).\\
\end{array}
\]
The second part is an immediate consequence of the first.
\end{proof}


The intuition of this result is clear: if we want to build the joint
$p$-box $(\ldf,\udf)$ from two given marginals $(\ldf_{\rm
X},\udf_{\rm X}),(\ldf_{\rm Y},\udf_{\rm Y})$, and we have no
information about the interaction between the underlying variables
$X$, $Y$, we should consider the largest, or most conservative,
imprecise copula: $(C_{\rm L},C_{\rm M})$. This corresponds to
combining the compatible univariate distribution functions by means
of all possible copulas, and then taking the envelopes of the
resulting set of bivariate distribution functions. What
Proposition~\ref{prop:naturalextension} shows is that this procedure
is equivalent to considering the \emph{natural extension} of the
associated coherent lower probabilities, and then take its
associated bivariate $p$-box. In
other words, the following diagram commutes: 
\begin{center}
\begin{picture}(300,100)
\put(35,80){$\lpr_{\rm X},\lpr_{\rm Y}$}
\put(250,80){$\underline{E}=\lpr_{(\ldf,\udf)}$}
\put(8,10){$(\ldf_{\rm X},\udf_{\rm X}),(\ldf_{\rm Y},\udf_{\rm
Y})$} \put(240,10){$(\ldf,\udf)$}
\put(260,45){Eq.~\eqref{eq:lpr-from-pbox}}
\put(-18,45){Eqs.~\eqref{eq:def-marginal-lpr-x},~\eqref{eq:def-marginal-lpr-y}}
\put(50,22){\vector(0,1){50}}
\put(253,22){\vector(0,1){50}} \put(75,82){\vector(1,0){165}}
\put(100,12){\vector(1,0){134}} \put(120,88){Natural extension}
\put(145,18){$(C_{\rm L},C_{\rm M})$}
\end{picture}
\end{center}

\subsection{Independent products of random variables}\label{sec:strong-product}

Next, we consider another case of interest: that where the variables
$X,Y$ are assumed to be independent. 
Under imprecise information, there is more than one way to model the
notion of independence; see \cite{couso2000a} for a survey on this
topic. Because of this, there is more than one manner in which we
can say that a coherent lower prevision $\lpr$ on the product space
is an \emph{independent product} of its marginals $\lpr_{\rm
X},\lpr_{\rm Y}$. Since the formalism considered in this paper can
be embedded into the theory of coherent lower previsions, here we
shall consider the notions of \emph{epistemic irrelevance} and
\emph{independence}, which seem to be more sound under the
behavioural interpretation that is at the core of this theory.

The study of independence under imprecision suffers from a number of
drawbacks when the underlying possibility spaces are infinite
\cite{miranda2014}. Because of this fact, we shall consider that the
variables $X,Y$ under study take values in respective finite spaces
$\pspace,\apspace$. Then the available information about these
variables is given by a coherent lower prevision $\lpr$ on
$\gambles(\pspace\times\apspace)$. We shall denote by $\lpr_{\rm
X},\lpr_{\rm Y}$ its respective marginals on
$\gambles(\pspace),\gambles(\apspace)$. Note that, similarly to
Eq.~\eqref{eq:pbox-from-lpr}, we can consider the bivariate $p$-box
$(\ldf,\udf)$ induced by $\lpr$ on $\pspace\times\apspace$, and also
the univariate $p$-boxes $(\ldf_{\rm X},\udf_{\rm X}),(\ldf_{\rm
Y},\udf_{\rm Y})$ induced by $\lpr_{\rm X},\lpr_{\rm Y}$ on
$\pspace,\apspace$.

We say then that the random variable $Y$ is {\em epistemically
irrelevant} to $X$ when
\begin{equation*}
 \lpr_{\rm X}(f|y):=\lpr_{\rm X}(f(\cdot,y)) \ \forall f\in\gambles(\pspace\times\apspace),y\in\apspace.
\end{equation*}
The variables $X,Y$ are said to be \emph{epistemically independent}
when each of them is epistemically irrelevant to the other:
\begin{equation}\label{eq:epistemic-independence}
 \lpr_{\rm X}(f|y):=\lpr_{\rm X}(f(\cdot,y)) \text{ and } \lpr_{\rm Y}(f|x):=\lpr_{\rm Y}(f(x,\cdot))
\end{equation}
for every $f\in\gambles(\pspace\times\apspace), x\in\pspace,
y\in\apspace$.

Here a \emph{conditional lower prevision} $\lpr(\cdot|\pspace)$ on
$\pspace\times\apspace$ is a collection of coherent lower previsions
$\{\lpr(\cdot|x):x\in\pspace\}$, so that $\lpr(\cdot|x)$ models the
available information about the outcome of $(X,Y)$ when we know that
$X$ takes the value $x$.\footnote{Strictly speaking,
$\lpr(\cdot|\pspace)$ refers to the lower prevision conditional on
the partition $\{\{x\}\times\apspace: x\in\pspace\}$ of
$\pspace\times\apspace$, and we use $\lpr(f|x)$ to denote
$\lpr(f|\{x\}\times\apspace)$. The reason for this is that Walley's
formalism defines lower previsions conditional on partitions of the
possibility space \cite[Chapter~6]{walley1991}.} Note that given
$f\in\gambles(\pspace\times\apspace)$, $\lpr(f|\pspace)$ is the
gamble on $\pspace\times\apspace$ that takes the value $\lpr(f|x)$
on the set $\{x\}\times\apspace$. Analogous comments can be made
with respect to $\lpr(\cdot|\apspace)$.

If we have a coherent lower prevision $\lpr$ and conditional lower
previsions $\lpr(\cdot|\pspace),\lpr(\cdot|\apspace)$, we should
check if the information they encompass is globally consistent. This
can be done by means of the notion of \emph{(joint) coherence} in
\cite[Def~7.1.4]{walley1991}, and from this we can establish the
following definition:

\begin{definition}
\label{def:independent_product} Let $\lpr$ be a coherent lower
prevision on $\gambles(\pspace\times\apspace)$ with marginals
$\lpr_{\rm X},\lpr_{\rm Y}$. We say that $\lpr$ is an \emph{independent product}
when it is coherent with the conditional lower previsions
$\lpr_{\rm X}(\cdot|\apspace),\lpr_{\rm Y}(\cdot|\pspace)$ derived from
$\lpr_{\rm X},\lpr_{\rm Y}$ by means of Eq.~\eqref{eq:epistemic-independence}.
\end{definition}


Given $\lpr_{\rm Y}$ and $\lpr_{\rm Y}$, one example of independent product is
the \emph{strong product}, given by
\begin{equation}\label{eq:strong-product}
 \lpr_{\rm X} \boxtimes \lpr_{\rm Y}:=\inf\{\pr_{\rm X} \times \pr_{\rm Y}: \pr_{\rm X} \geq
 \lpr_{\rm X}, \pr_{\rm Y} \geq \lpr_{\rm Y}\},
\end{equation}
where $\pr_{\rm X}\times\pr_{\rm Y}$ refers to the linear prevision uniquely
determined by\footnote{Recall that this is possible because we are
assuming that the possibility spaces $\pspace,\apspace$ are finite;
to see that the procedure above may not work with infinite spaces,
we refer to \cite{miranda2014}.} the finitely additive probability
such that $(\pr_{\rm X}\times\pr_{\rm Y})(x,y)=\pr_{\rm X}(x)\cdot\pr_{\rm Y}(y) \ \forall
x\in\pspace,y\in\apspace$. The strong product is the joint model
satisfying the notion of \emph{strong independence}. However, it is
not the only independent product, nor is it the smallest one. In
fact, the smallest independent product of the marginal coherent
lower previsions $\lpr_{\rm X},\lpr_{\rm Y}$ is called their \emph{independent
natural extension}, and it is given, for every gamble $f$ on
$\pspace\times\apspace$, by
\begin{multline*}
 (\lpr_{\rm X} \otimes \lpr_{\rm Y})(f)\\:=\sup\{\mu: f-\mu \geq
 g-\lpr_{\rm X}(g|\apspace)+h-\lpr_{\rm Y}(h|\pspace) \text{ for some } g,h\in
 \gambles(\pspace\times\apspace)\}.
\end{multline*}

One way of building independent products is by means of the
following condition:

\begin{definition}
A coherent lower prevision $\lpr$ on
$\gambles(\pspace\times\apspace)$ is called \emph{factorising} when
\begin{equation*}
 \lpr(f g)=\lpr(f\lpr(g)) \ \forall f\in\gambles^{+}(\pspace),
 g\in\gambles(\apspace)
\end{equation*}
and
\begin{equation*}
 \lpr(f g)=\lpr(g\lpr(f)) \ \forall f\in\gambles(\pspace),
 g\in\gambles^{+}(\apspace).
\end{equation*}
\end{definition}


Both the independent natural extension and the strong product are
factorising. Indeed, it can be proven \cite[Theorem~28]{cooman2011a}
that any factorising $\lpr$ is an independent product of its
marginals, but the converse is not true.
Under factorisation, the following result holds:

\begin{proposition}\label{prop:factorising} Let $(\ldf_{\rm X},\udf_{\rm X}),(\ldf_{\rm Y},\udf_{\rm Y})$ be
marginal $p$-boxes, and let $\lpr_{\rm X},\lpr_{\rm Y}$ be their associated
coherent lower previsions. Let $\lpr$ be a factorising coherent
lower prevision on $\gambles(\pspace\times\apspace)$ with these
marginals. Then it induces the bivariate $p$-box $(\ldf,\udf)$ given
by
\begin{equation*}
 \ldf(x,y)=\ldf_{\rm X}(x)\cdot\ldf_{\rm Y}(y)  \text{ and }
 \udf(x,y)=\udf_{\rm X}(x)\cdot\udf_{\rm Y}(y) \ \forall
(x,y)\in\pspace\times\apspace.
\end{equation*}
\end{proposition}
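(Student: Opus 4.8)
The plan is to exploit the fact that on the finite product space $\pspace\times\apspace$ the event $A_{(x,y)}$ is a Cartesian product, so that its indicator splits into a product of an $\pspace$-measurable and an $\apspace$-measurable gamble, on which the factorising property bites directly.

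First I would fix $x\in\pspace$ and $y\in\apspace$ and set $I_x=\{x'\in\pspace:x'\leq x\}$ and $J_y=\{y'\in\apspace:y'\leq y\}$, using (as the paper does) the same symbols for these events and their indicators; then $A_{(x,y)}=I_x\cdot J_y$ with $I_x\in\gambles^{+}(\pspace)$ and $J_y\in\gambles(\apspace)$. Because the restrictions of $\lpr$ to $\pspace$- and to $\apspace$-measurable gambles are the marginals, we have $\lpr(I_x)=\lpr_{\rm X}(I_x)=\ldf_{\rm X}(x)$, $\lpr(J_y)=\lpr_{\rm Y}(J_y)=\ldf_{\rm Y}(y)$, and likewise $\upr(I_x)=\udf_{\rm X}(x)$, $\upr(J_y)=\udf_{\rm Y}(y)$. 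For the lower distribution I would then apply the factorising identity $\lpr(fg)=\lpr(f\lpr(g))$ with $f=I_x$ and $g=J_y$: since $\lpr(g)=\ldf_{\rm Y}(y)\geq 0$ is a constant, positive homogeneity (C2) gives
\[
\ldf(x,y)=\lpr(A_{(x,y)})=\lpr\big(I_x\,\ldf_{\rm Y}(y)\big)=\ldf_{\rm Y}(y)\,\lpr(I_x)=\ldf_{\rm X}(x)\cdot\ldf_{\rm Y}(y).
\]

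For the upper distribution I would pass to the conjugate upper prevision via $\udf(x,y)=1-\lpr(A_{(x,y)}^c)=\upr(A_{(x,y)})=-\lpr(-A_{(x,y)})$ and write $-A_{(x,y)}=I_x\cdot(-J_y)$. Factorisation with $f=I_x\in\gambles^{+}(\pspace)$ and $g=-J_y\in\gambles(\apspace)$ gives $\lpr(-A_{(x,y)})=\lpr\big(I_x\,\lpr(-J_y)\big)$, where $\lpr(-J_y)=-\upr(J_y)=-\udf_{\rm Y}(y)$ is a non-positive constant; hence the right-hand side equals
\[
\lpr\big(-\udf_{\rm Y}(y)\,I_x\big)=-\upr\big(\udf_{\rm Y}(y)\,I_x\big)=-\udf_{\rm Y}(y)\,\upr(I_x)=-\udf_{\rm X}(x)\,\udf_{\rm Y}(y),
\]
using conjugacy and the positive homogeneity of $\upr$ (legitimate since $\udf_{\rm Y}(y)\geq 0$). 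Taking conjugates once more yields $\udf(x,y)=\udf_{\rm X}(x)\cdot\udf_{\rm Y}(y)$, the second claim.

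The standardization of $(\ldf,\udf)$ is immediate, so the one delicate point is the upper case: factorisation is hypothesised only for \emph{lower} previsions, and converting it into the effective upper-prevision identity $\upr(I_x\,J_y)=\upr(I_x)\,\upr(J_y)$ forces one to track signs through the conjugacy relation $\lpr(-h)=-\upr(h)$ and to invoke positive homogeneity only for the nonnegative scalars $\ldf_{\rm Y}(y)$ and $\udf_{\rm Y}(y)$. I expect this sign bookkeeping to be the main obstacle, though it is mechanical once the product structure of $A_{(x,y)}$ has been recognised.
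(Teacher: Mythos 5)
Your proof is correct and follows essentially the same route as the paper's: your decomposition $A_{(x,y)}=I_x\cdot J_y$ is exactly the paper's factorisation $A_{(x,y)}=A_{(x,y^*)}\cdot A_{(x^*,y)}$ into cylinder events, the lower case is handled identically by factorisation plus positive homogeneity, and your treatment of the upper case (conjugacy, factorising on $I_x\cdot(-J_y)$, then sign bookkeeping with the nonnegative scalar $\udf_{\rm Y}(y)$) mirrors the paper's chain $\upr(A_{(x,y)})=-\lpr(A_{(x,y^*)}\cdot\lpr(-A_{(x^*,y)}))=\upr(A_{(x,y^*)})\cdot\upr(A_{(x^*,y)})$ step for step.
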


\begin{proof}
Let $x^*,y^*$ denote the maximum elements of $\pspace,\apspace$,
respectively. Since the indicator functions of
$A_{(x,y^*)},A_{(x^*,y)}$ are non-negative gambles such that
$A_{(x,y)}=A_{(x,y^*)}\cdot A_{(x^*,y)}$ and taking also into
account that $\lpr$ is factorising and positively homogeneous, we
get
\begin{equation*}
\lpr(A_{(x,y)})=\lpr(A_{(x,y^*)}\cdot
A_{(x^*,y)})=\lpr(A_{(x,y^*)})\cdot
\lpr(A_{(x^*,y)})=\ldf_{\rm X}(x)\cdot\ldf_{\rm Y}(y).
\end{equation*}
Similarly, if $\upr$ is the conjugate upper prevision of $\lpr$,
given by $\upr(f)=-\lpr(-f)$ for every
$f\in\gambles(\pspace\times\apspace)$, it holds that
\begin{align*}
\upr(A_{(x,y)})&=\upr(A_{(x,y^*)}\cdot
A_{(x^*,y)})\\&=-\lpr(A_{(x,y^*)} \cdot
(-A_{(x^*,y)}))=-\lpr(A_{(x,y^*)} \cdot (\lpr(-A_{(x^*,y)})))\\&=
-\lpr(-A_{(x,y^*)} \cdot (\upr(A_{(x^*,y)})))=
-\lpr(-A_{(x,y^*)})\cdot \upr(A_{(x^*,y)})\\&=
\upr(A_{(x,y^*)})\cdot \upr(A_{(x^*,y)})=\udf_{\rm
X}(x)\cdot\udf_{\rm Y}(y). \qedhere
\end{align*}
\end{proof}

From this it is easy to deduce that the $p$-box $(\ldf,\udf)$
induced by a factorising $\lpr$ is the envelope of the set of
bivariate distribution functions
\begin{equation*}
 \{F: F(x,y)=F_{\rm X}(x)\cdot F_{\rm Y}(y) \text{ for }
 F_{\rm X}\in(\ldf_{\rm X},\udf_{\rm X}),F_{\rm Y}\in(\ldf_{\rm Y},\udf_{\rm Y})\}.
\end{equation*}
In other words, the bivariate $p$-box can be obtained by applying
the imprecise version of Sklar's theorem
(Proposition~\ref{prop:Imp_Sklar2}) with the product copula.

Further, it has been showed in \cite{miranda2014} that a coherent
lower prevision $\lpr$ with marginals $\lpr_{\rm X},\lpr_{\rm Y}$ is factorising
if and only if it lies between the independent natural extension and
the strong product:
\begin{equation}\label{eq:factorising-bounds}
 \lpr_{\rm X}\otimes\lpr_{\rm Y} \leq \lpr \leq \lpr_{\rm X}\boxtimes\lpr_{\rm Y};
\end{equation}
as Walley showed in \cite[Section~9.3.4]{walley1991}, the
independent natural extension and the strong product do not coincide
in general, and this means that there may be an infinite number of
factorising coherent lower previsions with marginals
$\lpr_{\rm X},\lpr_{\rm Y}$. What Proposition~\ref{prop:factorising} tells us is
that all these factorising coherent lower previsions induce the same
bivariate $p$-box: the one determined by the product copula on the
marginal $p$-boxes.

Interestingly, this applies to other independence conditions that
guarantee the factorisation, such as the Kuznetsov property
\cite{cozman2014,cooman2011a}. This would mean that any Kuznetsov
product of the marginals $\lpr_{\rm X},\lpr_{\rm Y}$ induces the bivariate
$p$-box given by the product copula of the marginals.

However, not all independent products are factorising
\cite[Example~3]{cooman2011a}, and those that do not may
induce different $p$-boxes, as we show in the following example:

\begin{example}
Consider $\pspace=\apspace=\{0,1\}$. Let $\pr_1,\pr_2$ be the linear
previsions on $\gambles(\pspace)$ given by
\begin{equation*}
 \pr_1(f)=0.5 f(0)+ 0.5 f(1),\quad \pr_2(f)=f(0) \ \forall
 f\in\gambles(\pspace)
\end{equation*}
and let $\pr_3,\pr_4$ be the linear previsions on
$\gambles(\apspace)$ given by
\begin{equation*}
 \pr_3(f)=0.5 f(0)+ 0.5 f(1),\quad \pr_4(f)=f(0) \ \forall
 f\in\gambles(\apspace).
\end{equation*}
Consider the marginal lower previsions $\lpr_{\rm X}:=\min\{\pr_1,\pr_2\},
\lpr_{\rm Y}:=\min\{\pr_3,\pr_4\}$ on
$\gambles(\pspace),\gambles(\apspace)$, respectively. Applying
Eq.~\eqref{eq:strong-product}, their strong product is given by
\begin{multline*}
 \lpr_{\rm X}\boxtimes\lpr_{\rm Y}=\min\{\pr_1\times\pr_3, \pr_1\times\pr_4,
 \pr_2\times\pr_3, \pr_2\times\pr_4 \}\\ =
\min\{(0.25,0.25,0.25,0.25),(0.5,0,0.5,0),(0.5,0.5,0,0),(1,0,0,0)\},
\end{multline*}
where in the equation above a vector $(a,b,c,d)$ is used to denote
the vector of probabilities $\{(P(0,0),P(0,1),P(1,0),P(1,1))\}$.

Let $\lpr$ be the coherent lower prevision determined by the mass
functions
\begin{align*}
 \lpr:&=\min\{ \pr_1 \times (0.5 \pr_3+0.5\pr_4),
 (0.5\pr_1+0.5\pr_2)\times \pr_3, \pr_2\times\pr_4\}\\&=
 \min\{(0.375,0.125,0.375,0.125),(0.375,0.375,0.125,0.125),(1,0,0,0)\},
\end{align*}
where $(0.5 P_3+0.5 P_4)$ denotes the linear prevision on
$\gambles(\apspace)$ given by
\begin{equation*}
 (0.5 P_3+0.5 P_4)(f)=0.5 P_3(f)+0.5 P_4(f) \ \forall f\in\gambles(\apspace),
\end{equation*}
and similarly for $(0.5\pr_1+0.5\pr_2)$. Then the marginals of
$\lpr$ are also $\lpr_{\rm X},\lpr_{\rm Y}$. Moreover, since the extreme points
of $\solp(\lpr)$ are convex combinations of those of
$\solp(\lpr_{\rm X}\boxtimes\lpr_{\rm Y})$, we deduce that $\lpr$ dominates
$\lpr_{\rm X}\boxtimes\lpr_{\rm Y}$. Applying \cite[Proposition~5]{miranda2014},
we deduce that $\lpr$ is also an independent product of the marginal
coherent lower previsions $\lpr_{\rm X},\lpr_{\rm Y}$. Since it dominates
strictly the strong product, we deduce from
Eq.~\eqref{eq:factorising-bounds} that $\lpr$ is not factorising.

Now, since
\begin{equation*}
 \lpr(\{(0,0)\})=0.375 > 0.25
 =(\lpr_{\rm X}\boxtimes\lpr_{\rm Y})(\{(0,0)\}),
\end{equation*}
we see that the $p$-boxes associated with $\lpr$ and $\lpr_{\rm
X}\boxtimes\lpr_{\rm Y}$ differ. We conclude thus that not all
independent products induce the bivariate p-box that is the product
copula of its marginals. $\blacklozenge$
\end{example}

\begin{remark}
Interestingly, we can somehow distinguish between the strong product
and the independent natural extension in terms of bivariate
$p$-boxes, in the following way: if we consider the set of bivariate
distribution functions
\begin{equation*}
 \dfs:=\{\df_{\rm X} \times \df_{\rm Y}: \df_{\rm X} \in (\ldf_{\rm X},\udf_{\rm X}), \df_{\rm Y} \in(\ldf_{\rm Y},\udf_{\rm Y})\},
\end{equation*}
then it follows from Eq.~\eqref{eq:strong-product} that
\begin{equation}\label{eq:charac-sp}
 \lpr_{\rm X} \boxtimes \lpr_{\rm Y}:=\inf\{\pr: \df_{\rm P} \in\dfs\}.
\end{equation}
This differs from the coherent lower prevision given by
\begin{equation*}
 \lpr:=\min\{\pr: \df_{\rm P} \in (\ldf_{\rm X}\cdot \ldf_{\rm Y}, \udf_{\rm X}\cdot \udf_{\rm Y})\},
\end{equation*}
which will be in general more imprecise than the independent natural
extension $\lpr_{\rm X}\otimes\lpr_{\rm Y}$. Moreover, a
characterisation similar to Eq.~\eqref{eq:charac-sp} cannot be made
for the independent natural extension, in the sense that there is no
set of copulas ${\mathcal C}$ such that
\begin{multline*}
 \lpr_{\rm X} \otimes \lpr_{\rm Y}:=\inf\{\pr: \df_{\rm P}=C(\df_{\rm X},\df_{\rm Y}) \\ \text{ for some }C\in{\mathcal C},\df_{\rm X} \in (\ldf_{\rm X},\udf_{\rm X}), \df_{\rm Y} \in(\ldf_{\rm Y},\udf_{\rm Y})\};
\end{multline*}
indeed, just by considering the precise case we see that ${\mathcal
C}$ should consist just of the product copula, and this would give
back the definition of the strong product. $\blacklozenge$
\end{remark}

\section{Stochastic orders and copulas}\label{sec:stochastic-orders}

Next, we are going to apply the previous results to characterize the
preferences encoded by $p$-boxes. To this end, let us first of all
recall some basic notions on stochastic orders (see
\cite{levy1998,muller2002,shaked2006} for more information):

\begin{definition}\label{de:sto-dominance}
Given two univariate random variables $X$ and $Y$ 
with respective distribution functions $F_{\rm X}$ and $F_{\rm Y}$,
we say that $X$ {\em stochastically dominates} $Y$, and
denote it $X\fsd Y$, when
$F_{\rm X}(t)\leq F_{\rm Y}(t)$ for any $t$.
\end{definition}
This is one of the most extensively used methods for the comparison
of random variables. 
It is also called \emph{first order stochastic dominance}, so as to
distinguish it from the (weaker) notions of second, third, ..., n-th order stochastic dominance. 

An alternative for the comparison of random variables is statistical
preference. 
\begin{definition}[\cite{schuymer2003b,schuymer2003}]
Given two univariate random variables $X$ and $Y$, $X$ is said to be
{\em statistically preferred} to $Y$ if $P(X \geq Y)\geq P(Y \geq
X)$. This is denoted by $X\sp Y$.
\end{definition}
This notion is particularly interesting when the variables $X,Y$ take
values in a qualitative scale \cite{Dubois03}.

In addition to comparing pairs of random variables, or, more
generally, couples of `elements', with a preorder relation, we may
be interested in comparing pairs of sets (of random variables or
other `elements') by means of the given order relation. We can
consider several different possibilities:
\begin{definition}\label{de:imprecise-orders_general}
Let $\succeq$ be a preorder over a set $S$. Given  $A, B\subseteq
S$, we say that:
\begin{enumerate}
\item $A\succeq_1 B$ if and only if for every $a\in A$, $b\in B$ it holds that $a\succeq b$.
\item $A\succeq_2 B$ if and only if there exists some $a\in A$ such that $a\succeq b$ for every $b\in B$.
\item $A\succeq_3 B$ if and only if for every $b\in B$ there is some $a\in A$ such that $a\succeq b$.
\item $A\succeq_4 B$ if and only if there are $a\in A$, $b\in B$ such that $a\succeq b$.
\item $A\succeq_5 B$ if and only if there is some $b\in B$ such that $a\succeq b$ for every $a\in A$.
\item $A\succeq_6 B$ if and only if for every $a\in A$ there is $b\in B$ such that $a\succeq b$.
\end{enumerate}
\end{definition}

The relations $\succeq_{i}$ in Definition
\ref{de:imprecise-orders_general} have been discussed in
\cite{montes2014} in the case that $\succeq$ is the stochastic
dominance relation $\fsd$ and in \cite{montes2014b} in the case of
statistical preference, showing that several of them are related to
decision criteria explored in the literature of imprecise
probabilities.

Figure~\ref{fig} illustrates some of these extensions. In
Figure~\ref{fig:FSD1}, $A\succeq_1 B$ because all the alternatives
in $A$ are better than all the alternatives in $B$; in
Figure~\ref{fig:FSD2}, $A\succeq_2 B$ because there is an optimal
element in $A$, $a_1$, that is preferred to all the alternatives in
$B$; Figure~\ref{fig:FSD4} shows an example of $A\succeq_4 B$
because there are alternatives $a_1\in A$ and $b_2\in B$ such that
$a_1\succeq b_2$; finally, Figure~\ref{fig:FSD5} shows an example of
$A\succeq_5 B$ because there is a worst element in $B$, $b_1$ that
is dominated by all the elements in $A$. The difference between the
second and the third extensions (resp., fifth and sixth) lies in the
existence of a maximum (resp., minimum) or a supremum (resp.,
infimum) element in $A$ (resp., $B$).
\begin{figure}
\centering
        \begin{subfigure}[b]{0.3\textwidth}
                \includegraphics[width=\textwidth]{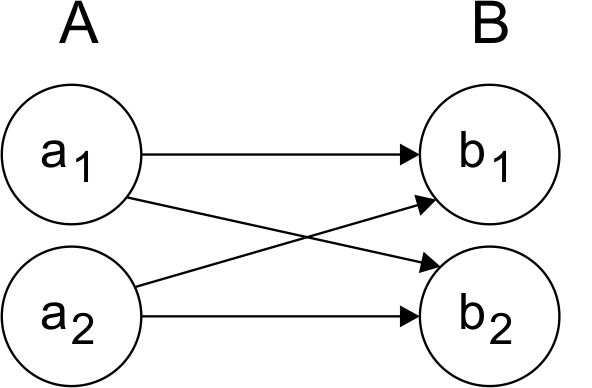}
                \caption{$A\succeq_1 B$}
                \label{fig:FSD1}
        \end{subfigure}\qquad%
        ~ 
        \begin{subfigure}[b]{0.3\textwidth}
                \includegraphics[width=\textwidth]{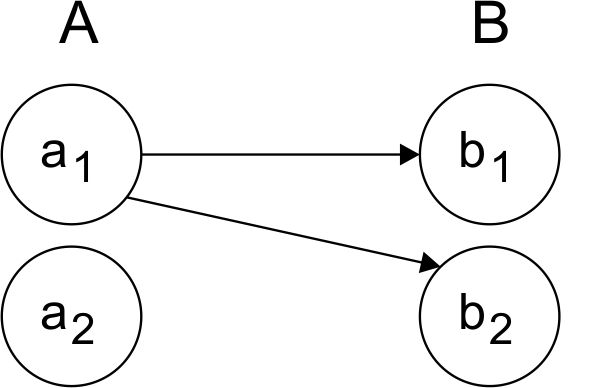}
                \caption{$A\succeq_2 B$}
                \label{fig:FSD2}
        \end{subfigure}\qquad
        ~ 
        \begin{subfigure}[b]{0.3\textwidth}
                \includegraphics[width=\textwidth]{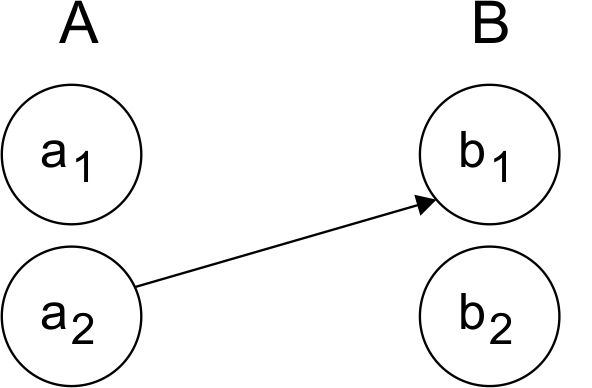}
                \caption{$A\succeq_4 B$}
                \label{fig:FSD4}
        \end{subfigure}\qquad\quad
                \begin{subfigure}[b]{0.3\textwidth}
                \includegraphics[width=\textwidth]{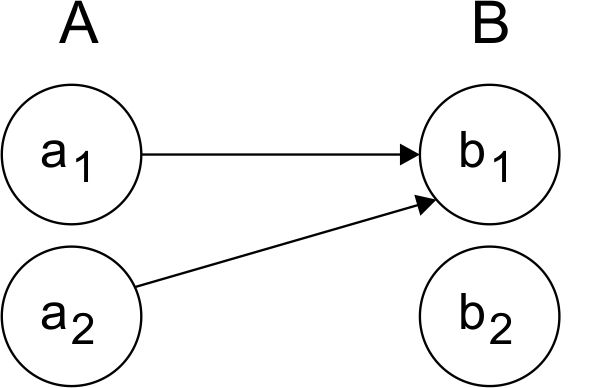}
                \caption{$A\succeq_5 B$}
                \label{fig:FSD5}
        \end{subfigure}\qquad
        \caption{Examples of the extensions of $\succeq_i$. In this picture $a_i\rightarrow b_j$ means $a_i\succeq b_j$.}\label{fig}
\end{figure}

\subsection{Univariate orders}
Although stochastic dominance does not imply statistical preference
in general\footnote{Consider for instance the case where the joint
distribution is given by $P(X=0,Y=0.5)=0.2,
P(X=0.5,Y=0)=P(X=1,Y=0)=P(X=0.5,Y=1)=0.1$ and $P(X=1,Y=1)=0.5$. Then
$X$ and $Y$ are equivalent with respect to stochastic dominance
because their cumulative distribution functions coincide; however,
$Y$ is strictly statistically preferred to $X$.
}, in the
univariate case a number of sufficient conditions have been
established for the implication, in terms of the copula that
determines the joint distribution from the marginal ones. This is
for instance the case when:
\begin{description}
\item[(SD-SP1)] $X$ and $Y$ are stochastically independent random variables, i.e., they are linked by the product copula (see \cite{schuymer2003b,schuymer2005,montes2011b});
\item[(SD-SP2)] $X$ and $Y$ are absolutely continuous random variables and they are coupled by an Archimedean copula (see \cite{Montes2010}).
\item[(SD-SP3)] $X,Y$ are either comonotonic or countercomonotonic, and they are both either simple or absolutely continuous.
\end{description}
In such cases, the implication transfers to the relations comparing
sets of random variables, by means of the following lemma. Its proof
is immediate and therefore omitted.

\begin{lemma}
\label{le:set_order} Let $\succeq$ be a preorder in a set $S$ and
$A, B\subseteq S$. Let also $\sqsupseteq$ be a preorder that extends
$\succeq$, i.e. $x\succeq y \Rightarrow x\sqsupseteq y \ \forall
x,y\in S$. Then, $A\succeq_{i} B\Rightarrow A\sqsupseteq_{i} B$ for
all $i=1,\ldots,6$.
\end{lemma}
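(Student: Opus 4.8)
The plan is to give a single uniform argument that handles all six relations at once, rather than checking each $i$ separately. The key observation is that each defining condition in Definition~\ref{de:imprecise-orders_general} is a logical formula in which the base preorder $\succeq$ occurs \emph{only positively} (never under a negation): it is built from the atomic predicate ``$a\succeq b$'' using only the quantifiers $\forall$ and $\exists$ ranging over $A$ and $B$. Since $\sqsupseteq$ extends $\succeq$, we have the pointwise implication $a\succeq b\Rightarrow a\sqsupseteq b$ for all $a,b\in S$, and positivity of the occurrence of $\succeq$ guarantees that upgrading every atomic instance upgrades the whole formula. This monotonicity is the entire content of the lemma.

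Concretely, I would argue as follows. For the relations with an existential component ($i=2,3,4,5,6$), I keep exactly the same witnessing elements. For instance, if $A\succeq_2 B$ holds, then there is some $a_0\in A$ with $a_0\succeq b$ for every $b\in B$; by the extension hypothesis $a_0\sqsupseteq b$ for every $b\in B$, so the same $a_0$ witnesses $A\sqsupseteq_2 B$. The cases $i=3,4,5,6$ are identical in spirit: each existentially quantified element continues to serve as a witness after replacing $\succeq$ by $\sqsupseteq$, and each universally quantified clause ``$x\succeq y$'' becomes the implied clause ``$x\sqsupseteq y$''. The purely universal case $i=1$ is immediate, since $a\succeq b$ for all $a\in A$, $b\in B$ yields $a\sqsupseteq b$ for all such pairs.

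There is essentially no obstacle here, which is why the authors call the proof immediate. The only point requiring (minor) care is bookkeeping: one must confirm that in all six definitions the relation $\succeq$ indeed appears positively, and that the quantifier domains ($A$ and $B$) are unchanged between $\succeq_i$ and $\sqsupseteq_i$, so that the witnesses transfer verbatim. Once this is verified, the implication $A\succeq_i B\Rightarrow A\sqsupseteq_i B$ follows for every $i=1,\dots,6$ from the single pointwise implication $a\succeq b\Rightarrow a\sqsupseteq b$.
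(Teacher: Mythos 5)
Your proof is correct: the paper omits the argument precisely because it is immediate, and what you write---transferring each existential witness verbatim and upgrading every atomic instance $a\succeq b$ to $a\sqsupseteq b$ via the extension hypothesis, justified by the observation that $\succeq$ occurs only positively in all six defining formulas---is exactly the intended routine verification. The uniform ``positivity/monotonicity'' framing is a clean way to package the six cases, but it does not differ in substance from the case-by-case check the authors had in mind.
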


Here $A$, $B$ are sets of random variables, denoted ${\mathcal V}_{\rm X}$, ${\mathcal V}_{\rm Y}$.
The following special case of Lemma \ref{le:set_order} is an instance.
\begin{proposition}
Consider two sets of random variables ${\mathcal V}_{\rm X},
{\mathcal V}_{\rm Y}$. Assume that any $X\in{\mathcal V}_{\rm X},
Y\in{\mathcal V}_{\rm Y}$ satisfy one of the conditions
(SD-SP1)$\div$(SD-SP3)
above. 
Then, for all $i=1,\ldots,6$:
\[
{\mathcal V}_{\rm X}\succeq_{\rm SD_i} {\mathcal V}_{\rm Y}
\Rightarrow {\mathcal V}_{\rm X}\succeq_{\rm SP_i} {\mathcal V}_{\rm
Y}.
\]
\end{proposition}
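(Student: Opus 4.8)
The plan is to obtain this as a direct application of Lemma~\ref{le:set_order}, taking $\succeq$ to be stochastic dominance $\fsd$ and $\sqsupseteq$ to be statistical preference $\SP$, with $S:=\mathcal{V}_{\rm X}\cup\mathcal{V}_{\rm Y}$, $A:=\mathcal{V}_{\rm X}$ and $B:=\mathcal{V}_{\rm Y}$. The only hypothesis of that lemma that needs checking is that $\SP$ \emph{extends} $\fsd$, namely that $X\fsd Y\Rightarrow X\SP Y$ for the pairs that the set relations actually compare.

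To verify the extension property, I would fix $X\in\mathcal{V}_{\rm X}$ and $Y\in\mathcal{V}_{\rm Y}$. By assumption this pair satisfies one of (SD-SP1)$\div$(SD-SP3), and each of these is precisely a situation in which the implication $X\fsd Y\Rightarrow X\SP Y$ has already been established in the literature recalled above. Hence, as a relation on $\mathcal{V}_{\rm X}\times\mathcal{V}_{\rm Y}$, $\fsd$ is contained in $\SP$, which is exactly the extension condition needed.

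With this in hand, I would note that the six relations of Definition~\ref{de:imprecise-orders_general}, instantiated with $A=\mathcal{V}_{\rm X}$ and $B=\mathcal{V}_{\rm Y}$, are each phrased solely through statements of the form $a\succeq b$ with $a\in\mathcal{V}_{\rm X}$ and $b\in\mathcal{V}_{\rm Y}$; no comparison internal to a single set is ever used. Therefore the pointwise implications above are exactly what Lemma~\ref{le:set_order} transports, and substituting $\SP$ for $\fsd$ throughout the defining quantifier patterns yields $\mathcal{V}_{\rm X}\succeq_{\rm SD_i}\mathcal{V}_{\rm Y}\Rightarrow\mathcal{V}_{\rm X}\succeq_{\rm SP_i}\mathcal{V}_{\rm Y}$ for every $i=1,\dots,6$.

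The delicate point, and the step I would be most careful about, is the matching of domains: the hypotheses guarantee $X\fsd Y\Rightarrow X\SP Y$ only across $\mathcal{V}_{\rm X}\times\mathcal{V}_{\rm Y}$ rather than for all pairs of $S$, and $\SP$ is in general not transitive, so it is not literally a preorder on $S$. The thing to spell out is that neither feature is an obstruction, because the set comparisons of Definition~\ref{de:imprecise-orders_general} invoke only the cross relations between $A$ and $B$; the proof of Lemma~\ref{le:set_order} never calls on transitivity nor on comparisons within a single set, so the pointwise extension on $\mathcal{V}_{\rm X}\times\mathcal{V}_{\rm Y}$ suffices.
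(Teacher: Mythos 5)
Your proof is correct and follows essentially the same route as the paper: conditions (SD-SP1)$\div$(SD-SP3) guarantee that statistical preference extends stochastic dominance on the pairs being compared, and Lemma~\ref{le:set_order} then transports this to all six set relations. You are in fact more careful than the paper's two-line proof, which applies the lemma even though its stated hypotheses (a preorder $\sqsupseteq$ extending $\succeq$ on all of $S$) are not literally met here---your observation that the six set relations only ever invoke cross comparisons between $\mathcal{V}_{\rm X}$ and $\mathcal{V}_{\rm Y}$, so that neither transitivity of $\SP$ nor the extension property on within-set pairs is needed, is precisely the point that makes the application rigorous.
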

\begin{proof}
As we have remarked, conditions (SD-SP1)$\div$(SD-SP3) above ensure
that the statistical preference relation is an extension of
stochastic dominance. The result follows from Lemma
\ref{le:set_order}.
\end{proof}

\subsection{Bivariate orders}

Next we consider the following extension of stochastic dominance to
the bivariate case:


\begin{definition}\label{de:bivariate-order}
Let $X=(X_1,X_2)$ and $Y=(Y_1,Y_2)$ be two random vectors with
respective bivariate distribution functions $F_{\rm X_1,X_2}$ and
$F_{\rm Y_1,Y_2}$. We say that $(X_1,X_2)$ {\em stochastically
dominates} $(Y_1,Y_2)$, and denote it $(X_1,X_2)\succeq_{\rm
SD}(Y_1,Y_2)$, if $F_{\rm X_1,X_2}(s,t)\leq F_{\rm Y_1,Y_2}(s,t)$
for all $(s,t)\in\mathbb{R}^{2}$.
\end{definition}

This definition establishes a way of comparing two bivariate vectors
$X=(X_1,X_2)$, $Y=(Y_1,Y_2)$ in case their associated distribution
functions are precisely known. However, it is not uncommon to have
uncertain information about these distribution functions, that we
can model by means of respective sets of distribution functions
$\dfs_{\rm X},\dfs_{\rm Y}$. If we now take
Definition~\ref{de:imprecise-orders_general} into account, we can
propose a generalisation of Definition~\ref{de:bivariate-order} to
the imprecise case:

\begin{definition}
Let $X=(X_1,X_2)$ and $Y=(Y_1,Y_2)$ be two random vectors with
respective sets of bivariate distribution functions $\dfs_{\rm X},\dfs_{\rm Y}$.
We say that $(X_1,X_2)$ {\em i-stochastically dominates}
$(Y_1,Y_2)$, and denote it $(X_1,X_2)\succeq_{\rm SD_i}(Y_1,Y_2)$,
if $\dfs_{\rm X} \leq_{\rm i} \dfs_{\rm Y}$.
\end{definition}

Since by Remark \ref{rem:copula} copulas can be interpreted as
bivariate distribution functions, the extensions $\leq_{\rm i}$ are
also applicable to them.

Note that the sets of distribution functions $\dfs_{\rm X},\dfs_{\rm Y}$ may be
obtained by combining two respective marginal $p$-boxes by means of
a set of copulas. In that case, we may study to which extent the
relationships between the sets $\dfs_{\rm X},\dfs_{\rm Y}$ can be determined by
means of the relationships between their marginal univariate
$p$-boxes. In other words, if we have information stating that $X_1$
stochastically dominates $Y_1$ and $X_2$ stochastically dominates
$Y_2$, we may wonder in which cases the pair $(X_1,X_2)$
$i$-stochastically dominates $(Y_1,Y_2)$. The following result gives
an answer to this question:

\begin{proposition}\label{prop:lo}
Given two random vectors $X=(X_1,X_2)$, $Y=(Y_1,Y_2)$, let
$(\ldf_{\rm X_1},\udf_{\rm X_1}), (\ldf_{\rm X_2},\udf_{\rm X_2}),
(\ldf_{\rm Y_1},\udf_{\rm Y_1})$, $(\ldf_{\rm Y_2},\udf_{\rm Y_2})$
be the marginal $p$-boxes associated with $X_1$, $X_2$, $Y_1$, $Y_2$
respectively. Let $\C_{\rm X}$ and $\C_{\rm Y}$ be two sets of
copulas. Define the following sets of bivariate distribution
functions $\dfs_{\rm X},\dfs_{\rm Y}$:
\begin{align*}
  \dfs_{\rm X}&:=\{C(F_{\rm X_1},F_{\rm X_2}): C\in\C_{\rm X}, F_{\rm X_1}\in
  (\ldf_{\rm X_1},\udf_{\rm X_1}), F_{\rm X_2}\in
  (\ldf_{\rm X_2},\udf_{\rm X_2})\}, \\
  \dfs_{\rm Y}&:=\{C(F_{\rm Y_1},F_{\rm Y_2}): C\in\C_{\rm Y}, F_{\rm Y_1}\in
  (\ldf_{\rm Y_1},\udf_{\rm Y_1}), F_{\rm Y_2}\in
  (\ldf_{\rm Y_2},\udf_{\rm Y_2})\}.
\end{align*}

Consider $i\in\{1,\dots,6\}$ and assume that $(\ldf_{\rm
X_j},\udf_{\rm X_j})\leq_{\rm i}(\ldf_{\rm Y_j},\udf_{\rm Y_j})$ for
$j=1,2$. Then:
\[
\C_{\rm X}\leq_{\rm i}\C_{\rm Y} \Rightarrow (X_1,X_2)\succeq_{\rm
SD_i} (Y_1,Y_2).
\]
\end{proposition}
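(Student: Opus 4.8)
The plan is to isolate a single monotonicity property of the construction $C\mapsto C(F_1,F_2)$ and then push each of the six comparison relations $\leq_{\rm i}$ through it. Write $\phi(C,F_1,F_2)$ for the bivariate distribution function given by $\phi(C,F_1,F_2)(s,t)=C(F_1(s),F_2(t))$, so that, identifying each univariate $p$-box with the set of distribution functions it bounds, $\dfs_{\rm X}=\{\phi(C,F_1,F_2):C\in\C_{\rm X},\,F_1\in(\ldf_{\rm X_1},\udf_{\rm X_1}),\,F_2\in(\ldf_{\rm X_2},\udf_{\rm X_2})\}$ and similarly for $\dfs_{\rm Y}$. Recall that the conclusion $(X_1,X_2)\succeq_{\rm SD_i}(Y_1,Y_2)$ means exactly $\dfs_{\rm X}\leq_{\rm i}\dfs_{\rm Y}$ for the pointwise order on bivariate distribution functions, so it suffices to transfer the coordinate hypotheses through $\phi$.

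First I would check that $\phi$ is non-decreasing, for the pointwise order, in each of its three arguments separately: monotonicity in $C$ is immediate, since $C\leq C'$ pointwise gives $C(F_1(s),F_2(t))\leq C'(F_1(s),F_2(t))$, whereas monotonicity in $F_1$ and in $F_2$ follows from the fact that every copula is component-wise increasing (as observed right after Definition~\ref{de:copula}). Combining the three, $C\leq C'$, $F_1\leq F_1'$ and $F_2\leq F_2'$ (all pointwise) together imply $\phi(C,F_1,F_2)\leq\phi(C',F_1',F_2')$.

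The core of the argument is then a purely order-theoretic transfer lemma: if $\psi$ is non-decreasing in each argument and $A_j\leq_{\rm i}B_j$ holds coordinatewise, then the image sets satisfy $\psi(A_1,\dots,A_k)\leq_{\rm i}\psi(B_1,\dots,B_k)$. I would prove this by running through the quantifier patterns of Definition~\ref{de:imprecise-orders_general}: for $i=2$ one selects, in each $A_j$, the element dominating all of $B_j$ and forms its $\psi$-image; for $i=3$ one produces, for a given element of the image of $B$, a coordinatewise dominated choice in each $A_j$; the cases $i=1,4$ are the analogous two-sided universal and existential statements, and $i=5,6$ are dual to $i=2,3$. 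In every case the single inequality invoked at the end is exactly the multi-argument monotonicity of $\psi$. Applying this with $\psi=\phi$ and with the coordinate hypotheses $\C_{\rm X}\leq_{\rm i}\C_{\rm Y}$ and $(\ldf_{\rm X_j},\udf_{\rm X_j})\leq_{\rm i}(\ldf_{\rm Y_j},\udf_{\rm Y_j})$ yields $\dfs_{\rm X}\leq_{\rm i}\dfs_{\rm Y}$, which is the claim. The only delicate part is the uniform bookkeeping of the six quantifier structures; once the monotonicity of $\phi$ is in hand, each case is mechanical.
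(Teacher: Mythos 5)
Your proposal is correct and takes essentially the same route as the paper: the paper's case-by-case proof for $i=1,\dots,4$ (with $i=5,6$ declared analogous) rests in every case on exactly the inequality chain $C_{\rm X}(F_{\rm X_1},F_{\rm X_2})\leq C_{\rm X}(F_{\rm Y_1},F_{\rm Y_2})\leq C_{\rm Y}(F_{\rm Y_1},F_{\rm Y_2})$, which is precisely the multi-argument monotonicity of your map $\phi$. Your only departure is to state the order-theoretic transfer lemma once and for all rather than redoing the quantifier bookkeeping in each case, which uniformizes but does not change the argument.
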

\begin{proof}
\begin{itemize}\setlength{\itemindent}{0.14in}
\item[$(i=1)$] We know that:
\[
\begin{array}{l}
\forall F_{\rm X_j}\in (\ldf_{\rm X_j},\udf_{\rm X_j}), F_{\rm Y_j}\in (\ldf_{\rm Y_j},\udf_{\rm Y_j}), F_{\rm X_j}\leq F_{\rm Y_j}, (j=1,2);\\
\forall C_{\rm X}\in\C_{\rm X},C_{\rm Y}\in \C_{\rm Y}, C_{\rm
X}\leq C_{\rm Y},\\
\end{array}
\]
Consider $F_{\rm X}\in\dfs_X$ and $F_{\rm Y}\in \dfs_Y$. They can be expressed in the
following way: $F_{\rm X}(x,y)=C_{\rm X}(F_{\rm X_1}(x),F_{\rm
X_2}(y))$ and $F_{\rm Y}(x,y)=C_{\rm Y}(F_{\rm Y_1}(x),F_{\rm
Y_2}(y))$, where $C_{\rm X}\leq C_{\rm Y}$. Then:
\[
\begin{array}{l c l}
F_{\rm X}(x,y)&=&C_{\rm X}(F_{\rm X_1}(x),F_{\rm X_2}(y))\leq C_{\rm X}(F_{\rm Y_1}(x),F_{\rm Y_2}(y))\\
         &\leq& C_{\rm Y}(F_{\rm Y_1}(x),F_{\rm Y_2}(y))=F_{\rm Y}(x,y),
\end{array}
\]
where the inequalities hold because copulas are component-wise increasing.
\item[$(i=2)$] We know that:
\[
\begin{array}{l}
\exists F_{\rm X_j}^*\in (\ldf_{\rm X_j},\udf_{\rm X_j}) \mbox{ s.t.
}F_{\rm X_j}^*\leq F_{\rm Y_j} \quad \forall F_{\rm Y_j}\in
(\ldf_{\rm Y_j},\udf_{\rm Y_j}), (j=1,2).\\
\exists C_{\rm X}^*\in\C_{\rm X} \mbox{ s.t. }C_{\rm
X}^*\leq C_{\rm Y}\quad \forall C_{\rm Y}\in\C_{\rm Y}.
\end{array}
\]
Consider $F_{\rm X}(x,y):=C_{\rm X}^*(F_{\rm X_1}^*(x),F_{\rm
X_2}^*(y))$, and let us see that $F_{\rm X}\leq F_{\rm Y}$ for any
$F_{\rm Y}=C_{\rm Y}(F_{\rm Y_1},F_{\rm Y_2})$ in $ \dfs_Y$:
\[
\begin{array}{lcl}
F_{\rm X}(x,y)&=&C_{\rm X}^*(F_{\rm X_1}^*(x),F_{\rm X_2}^*(y))\leq
C_{\rm X}^*(F_{\rm Y_1}(x),F_{\rm X_2}(y))\\
&\leq & C_{\rm
Y}(F_{\rm Y_1}(x),F_{\rm X_2}(y))=F_{\rm Y}(x,y).
\end{array}
\]
\item[($i=3$)] We know that:
\[
\begin{array}{l}
\forall F_{\rm Y_j}\in (\ldf_{\rm Y_j},\udf_{\rm Y_j}), \exists
F^*_{\rm X_j}\in(\ldf_{\rm X_j},\udf_{\rm X_j}) \mbox{ s.t. }F^*_{\rm X_j}\leq F_{\rm Y_j}, (j=1,2).\\
\forall C_{\rm Y}\in\C_{\rm Y} \ \exists C^*_{\rm X}\in\C_{\rm X}
\mbox{ s.t. }C^*_{\rm X}\leq C_{\rm Y}.
\end{array}
\]
Let $F_{\rm Y}\in\dfs_Y$. Then, there are $C_{\rm
Y}\in\mathcal{C}_{Y}, F_{\rm Y_1}\in(\ldf_{\rm Y_1},\udf_{\rm Y_1})$
and $F_{\rm Y_2}\in(\ldf_{\rm Y_2},\udf_{\rm Y_2})$ such that
$F_{\rm Y}(x,y)=C_{\rm Y}(F_{\rm Y_1}(x),F_{\rm Y_2}(y))$. Let us
check that there is $F_{\rm X}$ in $\dfs_X$ such that $F_{\rm X}\leq
F_{\rm Y}$. Let $F_{\rm X}(x,y)=C^*_{\rm X}(F^*_{\rm
X_1}(x),F^*_{\rm X_2}(y))$. Then:
\[
\begin{array}{lcl}
F_{\rm X}(x,y)&=&C^*_{\rm X}(F^*_{\rm X_1}(x),F^*_{\rm X_2}(y))\leq
C^*_{\rm X}(F_{\rm Y_1}(x),F_{\rm Y_2}(y))\\ &\leq&
C_{\rm Y}(F_{\rm Y_1}(x),F_{\rm Y_2}(y))=F_{\rm Y}(x,y).
\end{array}
\]
\item[($i=4$)] We know that:
\[
\begin{array}{l}
\exists
F_{\rm X_j}^*\in(\ldf_{\rm X_j},\udf_{\rm X_j}),F_{\rm Y_j}^*\in(\ldf_{\rm Y_j},\udf_{\rm Y_j})
\mbox{ s.t. }F_{\rm X_j}^*\leq F_{\rm Y_j}^*, (j=1,2).\\
\exists C_{\rm X}^*\in\C_{\rm X},C_{\rm Y}^*\in\C_{\rm Y} \mbox{ s.t. }C^*_{\rm X}\leq C^*_{\rm Y}.
\end{array}
\]
Let us consider the distribution functions $F_{\rm X}(x,y)=C_{\rm X}^*(F_{\rm X_1}^*(x),F_{\rm X_2}^*(y))$ and $F_{\rm Y}(x,y)=C_{\rm Y}^*(F_{\rm Y_1}^*(x),F_{\rm Y_2}^*(y))$. It holds that $F_{\rm X}\leq
F_{\rm Y}$:
\[
\begin{array}{lcl}
F_{\rm X}(x,y)&=&C_{\rm X}^*(F_{\rm X_1}^*(x),F_{\rm X_2}^*(y))\leq
C_{\rm X}^*(F_{\rm Y_1}^*(x),F_{\rm Y_2}^*(y))\\
&\leq &C_{\rm Y}^*(F_{\rm Y_1}^*(x),F_{\rm Y_2}^*(y))=F_{\rm Y}(x,y).
\end{array}
\]
\end{itemize}
\begin{itemize}\setlength{\itemindent}{0.5in}
\item[($i=5,i=6$)] The proof of these two cases is analogous to that of $i=2$ and $i=3$, respectively. $\qedhere$
\end{itemize}
\end{proof}

\subsection{Natural extension and independent products}

To conclude this section, we consider the particular cases discussed in Sections~\ref{sec:natex}
and~\ref{sec:strong-product}: those where the bivariate $p$-box is
the natural extension or a factorising product.

By Proposition~\ref{prop:naturalextension}, the natural extension of
two marginal $p$-boxes $(\ldf_{\rm X},\udf_{\rm X})$ and $(\ldf_{\rm Y},\udf_{\rm Y})$ is
given by:
\begin{equation}\label{eq:naturalextension}
\ldf(x,y)=C_{\rm L}(\ldf_{\rm X}(x),\ldf_{\rm Y}(y)) \mbox{ and }\udf(x,y)=C_{\rm M}(\udf_{\rm X}(x),\udf_{\rm Y}(y)).
\end{equation}
This allows us to prove the following result:

\begin{corollary}
Consider marginal $p$-boxes $(\ldf_{\rm X_1},\udf_{\rm
X_1}),(\ldf_{\rm X_2},\udf_{\rm X_2}),(\ldf_{\rm Y_1},\udf_{\rm
Y_1})$ and $(\ldf_{\rm Y_2},\udf_{\rm Y_2})$. Let $(\ldf_{\rm
X},\udf_{\rm X})$ (resp., $(\ldf_{\rm Y},\udf_{\rm Y})$) be the
natural extension of the $p$-boxes $(\ldf_{\rm X_1},\udf_{\rm
X_1}),(\ldf_{\rm X_2},\udf_{\rm X_2})$ (resp., $(\ldf_{\rm
Y_1},\udf_{\rm Y_1}),(\ldf_{\rm Y_2},\udf_{\rm Y_2})$) by means of
Eq.~\eqref{eq:naturalextension}. Then for $i=2,\dots,6$,
\[
(\ldf_{\rm X_j},\udf_{\rm X_j})\leq_{\rm i} (\ldf_{\rm
Y_j},\udf_{\rm Y_j}), j=1,2 \Rightarrow (X_1,X_2)\succeq_{\rm SD_i}
(Y_1,Y_2).
\]
\end{corollary}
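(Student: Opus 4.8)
The plan is to read this corollary as the special case of Proposition~\ref{prop:lo} in which both sets of copulas coincide with $\mathcal{C}^{*}$, the set of \emph{all} copulas. By Proposition~\ref{prop:naturalextension}, the natural extension of two marginal $p$-boxes is obtained by combining them through the pair $(C_{\rm L},C_{\rm M})$, that is, through every copula; the lemma preceding that proposition identifies the associated credal set with $\mathcal{M}(C_{\rm L},C_{\rm M})$. Consequently the set of distribution functions compatible with the natural extension $(\ldf_{\rm X},\udf_{\rm X})$ is exactly $\dfs_{\rm X}=\{C(F_{\rm X_1},F_{\rm X_2}):C\in\mathcal{C}^{*},\ F_{\rm X_j}\in(\ldf_{\rm X_j},\udf_{\rm X_j})\}$, and likewise for $\dfs_{\rm Y}$. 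These are precisely the sets in Proposition~\ref{prop:lo} with $\C_{\rm X}=\C_{\rm Y}=\mathcal{C}^{*}$.

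It then suffices to check the two hypotheses of Proposition~\ref{prop:lo}. The marginal assumption $(\ldf_{\rm X_j},\udf_{\rm X_j})\leq_{\rm i}(\ldf_{\rm Y_j},\udf_{\rm Y_j})$ for $j=1,2$ is given. The remaining hypothesis is $\mathcal{C}^{*}\leq_{\rm i}\mathcal{C}^{*}$, which I would verify directly from Definition~\ref{de:imprecise-orders_general} with underlying relation $\leq$, using the Fr\'{e}chet--Hoeffding bounds \eqref{eq:Frechet-bounds}. For $i=2$ the {\L}ukasiewicz copula $C_{\rm L}$ satisfies $C_{\rm L}\leq C$ for every copula $C$, so it witnesses $\mathcal{C}^{*}\leq_{2}\mathcal{C}^{*}$; dually, for $i=5$ the minimum copula $C_{\rm M}$ satisfies $C\leq C_{\rm M}$ for every $C$, witnessing $\mathcal{C}^{*}\leq_{5}\mathcal{C}^{*}$. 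For $i\in\{3,4,6\}$ the condition holds trivially, by taking the same copula on both sides. Applying Proposition~\ref{prop:lo} then yields $(X_1,X_2)\succeq_{\rm SD_i}(Y_1,Y_2)$.

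The step I expect to require the most care is the reflexivity-type condition $\mathcal{C}^{*}\leq_{\rm i}\mathcal{C}^{*}$, and in particular the cases $i=2$ and $i=5$, which rely essentially on the \emph{existence} of extremal copulas, namely on $C_{\rm L}$ and $C_{\rm M}$ being themselves copulas. This also pinpoints why the range of $i$ is restricted: the case $i=1$ would require every pair of copulas to be comparable under $\leq$, which is false, so $\mathcal{C}^{*}\leq_{1}\mathcal{C}^{*}$ fails and the implication cannot be obtained in this way. A secondary point worth confirming is the identification of $\dfs_{\rm X}$ with the copula-generated set above: one checks that any distribution function lying between $C_{\rm L}(\ldf_{\rm X_1},\ldf_{\rm X_2})$ and $C_{\rm M}(\udf_{\rm X_1},\udf_{\rm X_2})$ has its marginals inside the corresponding univariate $p$-boxes (send one argument to $+\infty$ and use $C_{\rm L}(u,1)=C_{\rm M}(u,1)=u$), so that the precise Sklar theorem expresses it as a copula of admissible marginals.
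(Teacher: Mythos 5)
Your proof is correct, and although it shares the paper's skeleton (both specialize Proposition~\ref{prop:lo} under the given marginal hypotheses), the instantiation is genuinely different. The paper takes $\C_{\rm X}=\C_{\rm Y}=\{C_{\rm L},C_{\rm M}\}$: the check $\{C_{\rm L},C_{\rm M}\}\leq_{\rm i}\{C_{\rm L},C_{\rm M}\}$ for $i=2,\dots,6$ is then immediate from $C_{\rm L}\leq C_{\rm M}$, but the conclusion of Proposition~\ref{prop:lo} only compares the two small sets of distribution functions generated by these two copulas, so the paper needs a final (and only sketched) upgrade step to pass to the comparison of the full p-box sets $(\ldf_{\rm X},\udf_{\rm X})\leq_{\rm i}(\ldf_{\rm Y},\udf_{\rm Y})$, exploiting that $\ldf_{\rm X}=C_{\rm L}(\ldf_{\rm X_1},\ldf_{\rm X_2})$ and $\udf_{\rm X}=C_{\rm M}(\udf_{\rm X_1},\udf_{\rm X_2})$ are themselves members of the generated sets. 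You instead take $\C_{\rm X}=\C_{\rm Y}=\mathcal{C}^{*}$, the set of all copulas; the cost is shifted to two places, both of which you handle correctly: verifying $\mathcal{C}^{*}\leq_{\rm i}\mathcal{C}^{*}$, where $i=2$ and $i=5$ genuinely need $C_{\rm L}$ and $C_{\rm M}$ as witnesses (reflexivity suffices for $i=3,4,6$), and identifying the generated sets $\dfs_{\rm X},\dfs_{\rm Y}$ with the natural-extension p-box sets. Your identification argument is sound: one inclusion follows from Eq.~\eqref{eq:Frechet-bounds} and componentwise monotonicity of copulas, and the other from evaluating at $+\infty$ to see that the marginals of any compatible bivariate distribution function lie in the univariate p-boxes, then invoking the precise Sklar theorem (Theorem~\ref{th:sklar-precise}); this mirrors the argument in part~2 of the lemma preceding Proposition~\ref{prop:naturalextension}. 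The payoff of your route is that Proposition~\ref{prop:lo} then yields the corollary's conclusion literally, with no upgrade step; the payoff of the paper's route is that every verification involves only two copulas. Your closing remark about $i=1$ is also accurate: both methods fail there because $\leq_1$ would require pairwise comparability, and indeed the paper's subsequent example shows the implication itself is false for $i=1$.
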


\begin{proof}
Take $\mathcal{C}_{\rm X}=\mathcal{C}_{\rm Y}=\{C_{\rm L},C_{\rm
M}\}$ in Proposition~\ref{prop:lo}. Since $C_{\rm L}\leq C_{\rm L}$,
$C_{\rm M}\leq C_{\rm M}$ and $C_{\rm L}\leq C_{\rm M}$, we get
$\mathcal{C}_{\rm X}\leq_{\rm i}\mathcal{C}_{\rm Y}$,
$i=2,\ldots,6$. Then, Proposition~\ref{prop:lo} ensures $\{\ldf_{\rm
X},\udf_{\rm X}\}\leq_{\rm i}\{\ldf_{\rm Y},\udf_{\rm Y}\} \
(i=2,\ldots,6)$. It is not difficult to check then that this implies
also $(\ldf_{\rm X},\udf_{\rm X})\leq_{\rm i}(\ldf_{\rm Y},\udf_{\rm
Y}) \ (i=2,\ldots,6)$, because of the special form of
$\mathcal{C}_{\rm X}$, $\mathcal{C}_{\rm Y}$.
\end{proof}

To see that the result does not hold for $\leq_1$, consider the
following example:

\begin{example}
For $j=1,2$, let $\ldf_{\rm X_j}=\udf_{\rm X_j}=\ldf_{\rm Y_j}=\udf_{\rm Y_j}$ be
the distribution function associated with the uniform probability
distribution on $[0,1]$, given by $F(x)=x$ for every $x\in[0,1]$.
Then trivially
\begin{equation*}
F=(\ldf_{\rm X_j},\udf_{\rm X_j})\leq_{\rm 1} (\ldf_{\rm
Y_j},\udf_{\rm Y_j})=F \ \forall j=1,2.
\end{equation*}
However, $(\ldf_{\rm X},\udf_{\rm X})\nleq_{\rm 1}(\ldf_{\rm
Y},\udf_{\rm Y})$, since $C_{\rm M}(F,F)\in (\ldf_{\rm X},\udf_{\rm
X}), C_{\rm L}(F,F)\in (\ldf_{\rm Y},\udf_{\rm Y})$ and
\begin{align*}
C_{\rm M}(F,F)(0.5,0.5)&=C_{\rm M}(F(0.5),F(0.5))=C_{\rm
M}(0.5,0.5)=0.5 > 0
\\&=C_{\rm L}(0.5,0.5)=C_{\rm L}(F(0.5),F(0.5))=C_{\rm L}(F,F)(0.5,0.5). \blacklozenge
\end{align*}
\end{example}

On the other hand, Proposition~\ref{prop:factorising} implies that,
given two finite spaces $\pspace,\apspace$, any factorising coherent
lower prevision $\lpr$ on $\gambles(\pspace\times\apspace)$
determines a bivariate $p$-box that is the product of its marginal
$p$-boxes by means of the product copula. Taking this property into
account, we can compare two factorising independent products in
terms of the relationships between their marginals. From
Proposition~\ref{prop:lo}, we deduce the following:

\begin{corollary}
Consider marginal $p$-boxes
$(\ldf_{\rm X_1},\udf_{\rm X_1}),(\ldf_{\rm Y_1},\udf_{\rm Y_1})$,
$(\ldf_{\rm X_2},\udf_{\rm X_2})$ and $(\ldf_{\rm Y_2},\udf_{\rm
Y_2})$,
and let us define the following sets of bivariate distribution
functions $\dfs_X,\dfs_Y$ by 
\begin{align*}
  \dfs_X&:=\{F_{\rm X_1} \cdot F_{\rm X_2}: F_{\rm X_1}\in
  (\ldf_{\rm X_1},\udf_{\rm X_1}), F_{\rm X_2}\in
  (\ldf_{\rm X_2},\udf_{\rm X_2})\}, \\
  \dfs_Y&:=\{F_{\rm Y_1} \cdot F_{\rm Y_2}: F_{\rm Y_1}\in
  (\ldf_{\rm Y_1},\udf_{\rm Y_1}), F_{\rm Y_2}\in
  (\ldf_{\rm Y_2},\udf_{\rm Y_2})\}.
\end{align*}
Then, for $i=1,\dots,6$,
\[
(\ldf_{\rm X_j},\udf_{\rm X_j})\leq_{\rm i} (\ldf_{\rm
Y_j},\udf_{\rm Y_j}), j=1,2 \Rightarrow (X_1,X_2)\succeq_{\rm SD_i}
(Y_1,Y_2). 
\]
\end{corollary}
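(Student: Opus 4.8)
The plan is to recognise the final corollary as nothing more than the specialisation of Proposition~\ref{prop:lo} in which both sets of copulas are taken to be the singleton $\{C_{\rm P}\}$, where $C_{\rm P}(u,v)=u\cdot v$ is the product copula. First I would observe that the pointwise product appearing in the corollary is precisely the product-copula image: for any $F_{\rm X_1}\in(\ldf_{\rm X_1},\udf_{\rm X_1})$ and $F_{\rm X_2}\in(\ldf_{\rm X_2},\udf_{\rm X_2})$ we have $C_{\rm P}(F_{\rm X_1}(x),F_{\rm X_2}(y))=F_{\rm X_1}(x)\cdot F_{\rm X_2}(y)$. Hence the set $\dfs_X$ defined in the corollary coincides exactly with the set $\dfs_{\rm X}$ of Proposition~\ref{prop:lo} when $\C_{\rm X}=\{C_{\rm P}\}$, and likewise $\dfs_Y$ coincides with $\dfs_{\rm Y}$ when $\C_{\rm Y}=\{C_{\rm P}\}$. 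Since $C_{\rm P}$ is a genuine copula, Proposition~\ref{prop:lo} is applicable with this choice.

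Next I would verify the copula-comparison hypothesis $\C_{\rm X}\leq_{\rm i}\C_{\rm Y}$ of that proposition. Because both sets equal the same singleton $\{C_{\rm P}\}$, each of the six conditions of Definition~\ref{de:imprecise-orders_general} collapses to the single comparison $C_{\rm P}\leq C_{\rm P}$, which holds trivially; thus $\{C_{\rm P}\}\leq_{\rm i}\{C_{\rm P}\}$ for every $i=1,\dots,6$. Combining this with the standing assumption $(\ldf_{\rm X_j},\udf_{\rm X_j})\leq_{\rm i}(\ldf_{\rm Y_j},\udf_{\rm Y_j})$ for $j=1,2$, Proposition~\ref{prop:lo} delivers $(X_1,X_2)\succeq_{\rm SD_i}(Y_1,Y_2)$ at once.

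Finally I would highlight the contrast with the preceding corollary on natural extensions, which explains why the present statement holds for \emph{all} $i=1,\dots,6$ rather than only $i=2,\dots,6$. There the relevant copula set was $\{C_{\rm L},C_{\rm M}\}$ with $C_{\rm L}\neq C_{\rm M}$, so the relation $\leq_1$ failed (it would demand $C_{\rm M}\leq C_{\rm L}$); here the singleton structure of $\{C_{\rm P}\}$ makes even $\leq_1$ hold. Since the whole argument is a direct substitution into Proposition~\ref{prop:lo}, there is no real obstacle: the only points needing care are confirming that the six set-comparisons genuinely collapse to a single inequality for a singleton set, and that the pointwise-product families $\dfs_X,\dfs_Y$ are literally the product-copula images of the four marginal $p$-boxes.
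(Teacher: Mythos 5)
Your proof is correct and follows exactly the paper's own argument: the paper likewise proves this corollary as the particular case of Proposition~\ref{prop:lo} with $\mathcal{C}_{\rm X}=\mathcal{C}_{\rm Y}=\{C_{\rm P}\}$. Your additional verifications (that the singleton collapse makes $\leq_{\rm i}$ trivial for all $i$, including $i=1$, and that $\dfs_X,\dfs_Y$ are the product-copula images) are just explicit spellings-out of what the paper leaves implicit.
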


\begin{proof}
The result is the particular case of Proposition~\ref{prop:lo} where
$\mathcal{C}_X=\mathcal{C}_Y=\{C_{\rm P}\}$.
\end{proof}

%
%
%

\section{Conclusions and open problems}\label{sec:conclusions}
In this work we have studied the extension of Sklar's theorem to an
imprecise framework, where instead of random variables precisely
described by their distribution functions, we have considered the
case when they are imprecisely described by p-boxes. For this aim,
we have introduced the notion of imprecise copula, and have proven
that if we link two marginal p-boxes by means of a set of copulas we
obtain a bivariate p-box whose associated lower probability is
coherent. Unfortunately, the main implication of Sklar's theorem
does not hold in the imprecise framework: there exist coherent
bivariate p-boxes that are not uniquely determined by their
marginals.

We have investigated two particular cases: on the one hand, we
considered the absence of information about the copula that links
the marginals. In that case, we end up with the natural extension of
the marginal p-boxes, that can be expressed in terms of the
{\L}ukasiewicz and the minimum copulas. On the other hand, we looked
upon the case where the marginal distributions satisfy the condition
of epistemic independence, and showed that the joint p-box can be
obtained in most, but not all cases, by means of the product copula.


There are a few open problems that arise from our work in this
paper: on the one hand, we should deepen the study of the properties
of imprecise copulas from the point of view of aggregation
operators. With respect to Sklar's theorem, we intend to look for
sufficient conditions for a bivariate p-box to be determined as an
imprecise copula of its marginals. A third open problem would be the
study in the imprecise case of the other extensions of stochastic
dominance to the bivariate case, based on the comparisons of
survival functions or expectations. Finally, it would be interesting
to generalize our results to the $n$-variate case. An interesting
work in this respect was carried out by Durante and Spizzichino in
\cite{Durante}.

\section*{Acknowledgements}

The research in this paper started during a stay of Ignacio Montes
at the University of Trieste supported by the Campus of
International Excellence of the University of Oviedo. Ignacio Montes
and Enrique Miranda also acknowledge the financial support by the
FPU grant AP2009-1034 and by project MTM2010-17844. Renato Pelessoni
and Paolo Vicig acknowledge partial support by the FRA2013 grant
`Models for Risk Evaluation, Uncertainty Measurement and Non-Life
Insurance Applications'. Finally, we would like to thank Jasper de
Bock and the anonymous reviewers for their useful suggestions.

\bibliographystyle{plain}

\end{document}